\documentclass[11pt]{amsart}
\usepackage{times}
\usepackage[T1]{fontenc}
\usepackage{amssymb, amsthm, amsmath}
\usepackage{mathrsfs}
\usepackage{mathtools}

\usepackage[active]{srcltx}

\usepackage{todonotes}

\usepackage{setspace}
\usepackage{geometry}

\usepackage{hyperref}

\DeclareMathOperator{\eq}{eq}

\DeclareMathOperator{\rv}{rv}

\newtheorem{introtheorem}{Theorem}

\newtheorem{theorem}{Theorem}[section]

\newtheorem{claim}{Claim}[theorem]

\newtheorem{corollary}[theorem]{Corollary}

\newtheorem{fact}[theorem]{Fact}
\newtheorem{lemma}[theorem]{Lemma}

\newtheorem{proposition}[theorem]{Proposition}

\newtheorem*{gen-dif}{\fbox{{\large A}} \hypertarget{Agen-dif}{Gen-Dif}}

\newtheorem*{min-balln}{\fbox{{\large A}} \hypertarget{Amin-ball}{Cballs}}

\theoremstyle{definition}
\newtheorem{definition}[theorem]{Definition}
\newtheorem{example}[theorem]{Example}
\newtheorem{remark}[theorem]{Remark}
\newtheorem{question}[theorem]{Question}
\newtheorem{notation}[theorem]{Notation}

\newcommand{\Nn}{{\mathbb{N}}}

\newcommand{\m}{\textbf{m}}
\newcommand{\bk}{\textbf{k}}

\newcommand{\CD}{{\mathcal D}}
\newcommand{\CL}{{\mathcal L}}
\newcommand{\CK}{{\mathcal K}}

\newcommand{\CM}{{\mathcal M}}

\newcommand{\CC}{{\mathcal C}}

\newcommand{\CO}{{\mathcal O}}

\newcommand{\0}{\emptyset}

\renewcommand{\phi}{\varphi}

\def\sub{\subseteq}

\newenvironment{claimproof}[1][\proofname]
{%
	\proof[#1]%
}
{%
	\endproof%
}

\title{A note on uniform finiteness in weakly o-minimal theories}

\author{Yatir Halevi}
\address{Faculty of Mathematics\\Technion - Israel Institute of Technology\\Haifa\\ Israel.}
 \email{yatirh@math.technion.ac.il}

\author{Assaf Hasson}
\address{Department of Mathematics, Ben Gurion University of the Negev, Be'er-Sheva, Israel}
\email{hassonas@math.bgu.ac.il}

\author{Ya'acov Peterzil}
\address{Faculty of Natural Sciences, Department of Mathematics, University of Haifa, Haifa, Israel}
\email{kobi@math.haifa.ac.il}

\date{\today}
\begin{document}

 \thanks{The first and second authors were partially supported by ISF grant No. 555/21. The third author was supported by ISF grant No. 290/19.}
    
	\maketitle
     \begin{abstract}
         For an $\aleph_0$-saturated weakly o-minimal expansion of an ordered group $\CM$, it is shown that $\CM^{\eq}$ has uniform finiteness if and only if the collection of definable convex subgroups of $\CM$ has uniform finiteness. If $\CM$ expands an ordered field, then considering definable convex valuation subrings is sufficient. The results use a criterion of Johnson for uniform finiteness in $\CM^{\eq}$, \cite{JohnUF}.
        
        
        In addition, it is shown that uniform finiteness in $\CM^{\eq}$ may fail for weakly o-minimal expansions of fields.

     \end{abstract}
    \section{Introduction}

A definable (or interpretable) set $X$ in a first-order structure $\CM$ has \emph{uniform finiteness} if for any definable family of subsets $\{D_t\subseteq X: t\in T\}$ there is a natural number $n\in \mathbb{N}$ for which $|D_t|=\infty\iff |D_t|>n$. A structure $\CM$ has uniform finiteness if all its definable subsets have uniform finiteness.  Many well-behaved structures have uniform finiteness; but not all, for example $(\mathbb{Z},+,<)$.

Uniform finiteness in $\CM^{\eq}$ is a different matter (see Definition \ref{D: uniform} for a precise meaning of the term): The p-adic field $\mathbb{Q}_p$ has uniform finiteness, but its interpretable value group does not.

In the present note we study uniform finiteness in weakly o-minimal theories. Recall that a linearly ordered structure $\CM$ is \textit{weakly o-minimal } if every definable subset of $M$ is a finite union of convex sets. Throughout we will be working only with $\aleph_0$-saturated weakly o-minimal structures, so we assume that the theory  of the structure is  weakly o-minimal. Weakly o-minimal theories, essentially by definition, have uniform finiteness. So we focus on uniform  finiteness in imaginary sorts. Our main results are: 

\begin{introtheorem}[Theorems \ref{T: convex subgroups is enough} and \ref{T:Main, UF iff conv val rings}]
    Let $\CM$ be an $\aleph_0$-saturated weakly o-minimal expansion of an ordered group. Then 
    \begin{enumerate}
        \item $\CM^{eq}$ has uniform finiteness if and only if the collection of definable convex\footnote{By \cite[Lemma 5.2]{MacMaSt} the convexity assumption is redundant. We keep it here and throughout for improved readability.}  subgroups of $\CM$ has uniform finiteness. 
        \item If, moreover,  $\CM$ is an expansion of an ordered field, then $\CM^{eq}$ has uniform finiteness if and only if the collection of  definable convex valuation subrings of $\CM$ has uniform finiteness.  
    \end{enumerate}
\end{introtheorem}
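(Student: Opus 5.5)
The plan is to reduce everything to Johnson's criterion from \cite{JohnUF}. In the form I will use, it says: for a theory with uniform finiteness in the home sort (which weakly o-minimal theories have), $\CM^{\eq}$ has uniform finiteness if and only if for every definable family of equivalence relations $\{E_t: t\in T\}$ on $M$ (in the home sort, one variable suffices after the reduction in \cite{JohnUF}), the number of classes $|M/E_t|$ is uniformly bounded when finite. Equivalently, there is no definable family $E_t$ such that $M/E_t$ is finite for each $t$ but the sizes are unbounded; by $\aleph_0$-saturation, this is the same as saying no single definable equivalence relation on a definable subset of $M$ has infinitely many classes while being a limit of finite ones.

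Both theorems have an easy direction. Suppose $\{H_t\}$ is a definable family of convex subgroups with $\bigcap$-behaviour violating uniform finiteness: say there are $H_t\le G_t$ with $[G_t:H_t]$ finite but unbounded. Then the quotients $G_t/H_t$ form a definable family of imaginary sets, so $\CM^{\eq}$ fails uniform finiteness. For part (2), a definable convex valuation ring $\CO$ yields the convex subgroup $\CO$ (and its maximal ideal), so a bad family of valuation rings gives a bad family of convex subgroups, and (1) applies.

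For the hard direction of (1), I take a definable family of equivalence relations $E_t$ on a definable subset of $M$ with finitely many classes, unboundedly many. Using weak o-minimality, each class is a finite union of convex sets uniformly in $t$. Then I refine $E_t$ to the relation "same class and in the same convex component". After this refinement the classes are convex, still finitely many, and the number of convex pieces per class is uniformly bounded. So I may assume the classes are convex. Convex partitions of a set into finitely many pieces are determined by their endpoints, which are cuts.

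The key step, and the main obstacle, is turning unboundedly many convex pieces into convex subgroups. I would pass by compactness to an $E$ with infinitely many convex classes that is approximated by finite ones. Then I would use group translations to consider the stabilizer $\{g: g+C=C \text{ up to }E\}$ of the cuts. Definable cuts in weakly o-minimal expansions of groups are, up to finitely many, cosets of the definable convex subgroup $\{g: g+c \text{ realizes the same cut}\}$; this is the step I expect to need real work. Finiteness of the partition for each $t$ should then force the associated subgroups $H_t\le G_t$ to have finite but unbounded index, which contradicts the hypothesis.

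For (2), given definable convex subgroups $H_t\le G_t$, I would take the valuation rings $\CO_t=\{x: xH_t\subseteq H_t\}$ and use the field structure to transfer the finite-index data to valuation rings. Multiplying by a scalar in $G_t\setminus H_t$ normalizes things so that the relevant groups become $\CO$ and its maximal ideal, or nearby ring-related subgroups. Unbounded indices then give unbounded residue field sizes or unbounded value-group phenomena among a definable family of convex valuation rings.
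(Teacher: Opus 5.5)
\textbf{There is a genuine gap, and it starts with how you read the hypothesis.}

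In this paper (Definition \ref{D: uniform}), ``the collection of definable convex subgroups has uniform finiteness'' means that every definable set of \emph{codes} of convex subgroups has uniform finiteness. That is, no definable family $\{Y_t\}$ of sets of convex subgroups has finite members of unbounded size. Equivalently, by saturation, no infinite definable set of convex subgroups is discretely ordered by inclusion.

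You read it instead as a statement about finite indices $[G_t:H_t]$. But for convex $H<G$ in an ordered group, $G/H$ is a non-trivial ordered, hence torsion-free, group, so the index is always $1$ or infinite. Under your reading the hypothesis is vacuous. Your hard direction would then prove that $\CM^{\eq}$ has uniform finiteness for every weakly o-minimal expansion of an ordered group, which is false (\cite[Example A.6]{HaHaPeSs} and the appendix). The same misreading drives your sketch of (2) (``transfer the finite-index data to valuation rings''). Correctly read, both easy directions are just the observation that the relevant codes are imaginaries.

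Consequently, the step you flag as the main obstacle is not merely unproved but false as stated. Definable cuts are not, up to finitely many, cosets of their stabilizers: irrational non-valuational cuts have trivial stabilizer and are determined by no point, and type (iii) cuts are not determined by any coset. Nor does anything produce finite indices. What is actually needed is a case analysis along Lemma \ref{L:all cuts}:
\begin{enumerate}
\item[(a)] Non-valuational cuts \emph{always} have uniform finiteness, with no hypothesis. Given an infinite discrete family, compactness yields $\alpha>0$ with gaps of size $>\alpha$ between successors. Then $\bigcup_y\{x: x<y<x+\alpha\}$ has infinitely many convex components, contradicting weak o-minimality.
\item[(b)] For cuts determined by a coset, map each cut to its stabilizer and use the hypothesis to bound the number of stabilizers. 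Then note that infinitely many cosets of a single $H$ give an infinite discrete definable subset of the weakly o-minimal (hence dense) $M/H$.
\item[(c)] For type (iii) cuts, fix the stabilizer $H$, push down to non-valuational cuts in $M/H$, and apply (a) there.
\end{enumerate}

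For (2), rescaling is not the point; the paper's argument runs as follows. Given an infinite discrete chain of definable convex subgroups $H$, intersect the rings $R(H)=\{a: aH\subseteq H\}$ to get a definable convex valuation ring $\CO$ with valuation $v$. Show that $H\mapsto v(H)$ is injective, using minimality of $\CO$ in the case $v(H)$ has a minimum. Uniform finiteness of valuation rings gives, via coarsenings of $\CO$, uniform finiteness of convex subgroups of $\Gamma_v$. Then (1), applied to the definable and not necessarily stably embedded weakly o-minimal group $\Gamma_v$, rules out the infinite discrete family $\{v(H)\}$ of final segments.

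Two parts of your proposal are fine and match the paper. Your one-variable version of Johnson's criterion is correct, and your refinement to convex classes corresponds to Lemma \ref{L: eliminating infty equiv nondiscrete}.
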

Our proof builds on a general criterion of Johnson, \cite[Theorem 2.3]{JohnUF}, for a structure $\CM$ to have uniform finiteness in all imaginary sorts. Specifically, Johnson proves that $\CM^{\eq}$ has uniform finiteness if and only if every definable set of \emph{unary imaginaries} has uniform finiteness. A definable set $X$ in $\CM^{\eq}$ is a {\em set of unary imaginaries} if $X$ is a collection (of codes)  of pairwise distinct subsets of $M$.

Using this result, we also prove an Ax-Kochen-Ershov style transfer result, namely that for a weakly o-minimal ordered field $\CM$, $\CM^{\eq}$ has uniform finiteness if and only if for some (equivalently, any) definable valuation $v$ (possibly trivial) the collections of all $M$-definable convex subgroups of $\Gamma_v$ (the value group) and of the additive group of $\bk_v$ (the residue field) have uniform finiteness (see Corollary \ref{C: UF for wom valued fields} for a more precise statement). This result extends our proof from \cite{HaHaPeGps}, asserting that power bounded T-convex expansions of o-minimal fields have uniform finiteness. 

To show that the above results are not vacuous, we give in the appendix an example of a weakly o-minimal expansion of a real closed valued field $\CK$ where uniform finiteness fails in $\CK^{\eq}$. It is a consequence of the following theorem together with  \cite[Example A.6]{HaHaPeSs}.

\begin{introtheorem}[Theorem \ref{T:appendix theorem}]
   Let $\CK$ be a pure real closed valued field. Let $\tilde k$, $\tilde \Gamma$ be weakly o-minimal expansions of the $\CK$-induced structure on the residue field and value group sorts. Then the resulting expansion  $\widetilde{\CK}$ of $\CK$ is weakly o-minimal. 
\end{introtheorem}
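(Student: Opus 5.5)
We need to show that every $\widetilde{\CK}$-definable subset of $K$ is a finite union of convex sets. The plan is to use a relative quantifier elimination / stable embeddedness result for pure real closed valued fields (RCVF). In RCVF the residue field $\bk$ and the value group $\Gamma$ are orthogonal, stably embedded, and their induced structures are a pure real closed field and a pure divisible ordered abelian group. Moreover, there is a quantifier elimination relative to these sorts. Every formula $\phi(x)$ with $x$ a field variable is equivalent to a boolean combination of:
\begin{itemize}
\item formulas $\psi(\rv(f_1(x)),\dots,\rv(f_m(x)))$ in the leading-term sort $RV$;
\item or, more coarsely, formulas $\chi(v(f_i(x)))$ and $\theta(\mathrm{res}(g_j(x)))$, where the $f_i,g_j$ are polynomials over parameters.
\end{itemize}
Since $\rv$ can be decomposed through $\bk^\times\to RV\to\Gamma$, this yields an elimination in which the $\bk$- and $\Gamma$-parts interact only via $RV$. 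I would first verify that this relative quantifier elimination persists after expanding $\bk$ and $\Gamma$ by arbitrary new structure, using a standard back-and-forth argument (as in Rideau/Flenner-style $RV$-expansion results). The auxiliary sorts remain stably embedded and orthogonal, and expanding them does not affect the field-sort elimination. This persistence is the main obstacle. Since new structure on $RV$ is induced from $\tilde k$ and $\tilde\Gamma$ jointly, one needs that definable sets in $RV$ in the expansion are boolean combinations of pullbacks from $\bk$ and $\Gamma$ over fibers. I would try to prove this via the short exact sequence $1\to\bk^\times\to RV\to\Gamma\to0$, which splits definably after naming parameters in a saturated model when $\Gamma$ is divisible.

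Given the reduction, take a definable $X\subseteq K$ given by such a formula with polynomials $f_1,\dots,f_m$. By $C$-minimality/weak o-minimality of RCVF, $K$ can be partitioned into finitely many convex pieces, each a point or an open set. On each piece, every $f_i$ has constant sign, and $x\mapsto v(f_i(x))$ is either constant or, after recentering at a ball center $c$, of the form $\gamma_i+n_i v(x-c)$ on a suitable annular region. Likewise $\mathrm{res}$ of a normalized $f_i$ is a polynomial function of $\mathrm{res}$ of a normalized $x-c$. Concretely, using Swiss-cheese decomposition with roots of the $f_i$ as centers, $\rv(f_i(x))$ becomes a definable function of $\rv(x-c)$.

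Thus, on each piece, $X$ is the preimage under $x\mapsto\rv(x-c)$ of a set definable in the expanded $RV$. Fibering over $v(x-c)\in\Gamma$, the set $\{\gamma: \ldots\}$ is a finite union of convex subsets of $\Gamma$ by weak o-minimality of $\tilde\Gamma$. Within a fixed valuation level, the residue condition cuts out finitely many convex subsets of $\bk$, uniformly in $\gamma$ only finitely often after using orthogonality: the residue condition depends on $\gamma$ only through finitely many $\Gamma$-definable pieces. Pulling back, a convex subset of $\bk$ at level $\gamma$ corresponds to a convex subset of the annulus side $\{x>c,\ v(x-c)=\gamma\}$. A convex union over a convex $\Gamma$-set of such strips, where in each strip the chosen part is a fixed convex set in $\bk$ (an initial/final/middle segment), is a finite union of convex subsets of $K$.

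Assembling finitely many pieces gives the result. The orthogonality of $\tilde k$ and $\tilde\Gamma$ is what forces the residue condition to be uniformly of finitely many shapes across $\gamma$, avoiding alternation that would break convexity.
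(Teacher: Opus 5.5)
Your architecture matches the paper's: reduce field-sort formulas to conditions on $(\rv(x-a_1),\dots,\rv(x-a_n))$ via Flenner's resplendent relative quantifier elimination, then analyse $\mathrm{RV}$ through the exact sequence $1\to\bk_{>0}\to\mathrm{RV}_{>0}\to\Gamma\to 0$. However, the step you flag as the main obstacle is handled incorrectly, and your final assembly rests on a false claim.

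\textbf{The splitting does not exist.} The sequence splits as abstract groups, because the \emph{kernel} $\bk_{>0}$ is divisible, not because $\Gamma$ is. But no section $s:\Gamma\to\mathrm{RV}_{>0}$ is definable, even with parameters. Suppose $s$ were definable in pure RCVF. Then $\{x>0:\ \rv(x)>s(v(x))\}$ would be definable. It meets each convex strip $\{x>0: v(x)=\gamma\}$ in a proper nonempty final segment, and these strips are ordered by $\gamma$ in reverse. So the set has one convex component per $\gamma$, contradicting weak o-minimality of RCVF itself.

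\textbf{The assembly claim is false.} The same computation refutes your last step. Take a union over an infinite convex $I\subseteq\Gamma$ of strips, each cut down by a fixed proper nonempty convex subset of $\bk_{>0}$ (under any identification of the fibre with $\bk_{>0}$). Such a union always has infinitely many convex components. So ``finitely many shapes'' of the residue condition is not enough. Orthogonality of $\bk$ and $\Gamma$ does not rescue this, since $\mathrm{RV}$ is not definably a product.

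\textbf{What is actually needed.} For definable $S\subseteq\mathrm{RV}_{>0}$, the fibre $S\cap v^{-1}(\gamma)$ must be empty or the whole fibre for all but finitely many $\gamma$. The paper obtains this from the Aschenbrenner--Chernikov--Gehret--Ziegler relative quantifier elimination for short exact sequences with divisible kernel (Fact~\ref{F:relQE in short exact}). By that result, every definable subset of $\mathrm{RV}_{>0}$ is a boolean combination of two kinds of sets:
\begin{enumerate}
\item pullbacks $v^{-1}(V)$ with $V$ definable in $\tilde\Gamma$;
\item sets which on each piece $P_J$ are either trivial or lie in a single fibre $v^{-1}(\gamma)$, where they equal $b+\iota(Y_J)$ with $Y_J$ definable in $\tilde\bk$.
\end{enumerate}
This is Proposition~\ref{P:wom shortexact}, and it gives weak o-minimality of $\mathrm{RV}$ directly.

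Lemma~\ref{L:Rvwom implies K wom} then transfers weak o-minimality to $K$. It partitions $K$ into the sets $U_i$ on which $v(x-a_i)$ is maximal, and uses $\rv(x-a_j)=\rv(x-a_i)\oplus\rv(a_i-a_j)$ there. This also makes your polynomial/Swiss-cheese analysis unnecessary.
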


\subsection*{Conventions} Throughout, given a structure $\CM$, unless specified otherwise, by a definable set we mean a definable set in $\CM^{\eq}$ with parameters.

\section{Definable cuts in ordered abelian groups}

The first set of definitions and facts are mostly known and taken, for example, from  \cite{FVKNar,Tressl}. We repeat them for the reader's convenience, and in order to fix some notation.

Let $(M,<,+)$ be a linearly ordered group. By {\em a cut in $M$} we mean $\mathcal C=(C_1,C_2)$, where $C_1$ is a non-trivial initial segment of $M$ and $C_2$ is its complement. 
Abusing terminology, we will identify the cut $\CC$ and the corresponding initial segment $C_1$.

\begin{definition}\label{D: Stab}
Given a cut $\CC=(C_1,C_2)$ in $M$, the stabilizer group of $\CC$, also known as \emph{the invariance group of $\CC$} is  $G(C_1)=G(\CC)=\{\delta\in M: \delta+C_1=C_1\}$. 
\end{definition}

The invariance group $G(\CC)$ is a convex (possibly trivial) subgroup of $(M,+)$. The following is standard:

\begin{definition}\label{D: cuts}
	Let $\CC=(C_1,C_2)$ be a cut. 
	\begin{enumerate}
		\item  $\CC$ is {\em valuational} if $G(\CC)\neq \{0\}$ and {\em non-valuational}, otherwise.
		\item A non-valuational cut in $M$ is \emph{rational} if it is realized (i.e., $C_1$ has a supremum) in $M$, and \emph{irrational} otherwise. 
		\item We say that a non-empty  initial segment  of $M$ is (non-)valuational or (ir)rational if its corresponding cut is such.
	\end{enumerate}	
\end{definition}

   If $\CC$ is a valuational cut in $M$ expanding a real closed field, then $\{x: xG\sub G\}$ is a non-trivial valuation ring, \cite[Lemma 6.2]{MacMaSt}.

For a convex set $D$ we write, as usual, $x<D$ to mean $x<c$ for all $c\in D$.  We also write $x\le D$ for $x<D\vee x\in D$. With this notation the following is immediate: 

\begin{lemma}\label{L:all cuts}
	Let $\CC=(C_1,C_2)$ be  a cut and  $H:=G(C)$. Then exactly one of the following holds:
\begin{enumerate}
    \item[(i)] $\CC$ is  irrational and non-valuational.
    \item[(ii)] There is a coset $S$ of $H$, possibly a singleton, such that $C_1=\{x\in M:x\,\Box\, S\}$, with $\Box\in \{<,\leq\}$. 
    \item[(iii)] $\CC$ is valuational  and for $\pi:M\to M/H$ the natural quotient map, $\pi(C_1)$ is an irrational non-valuational initial segment. 
\end{enumerate}	
\end{lemma}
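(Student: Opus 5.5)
The plan is a case split on whether $H=G(\CC)$ is trivial. Throughout I use that $C_1$ is a non-trivial initial segment, so both $C_1$ and $C_2$ are nonempty, and that $H$ is a convex subgroup with $C_1+H=C_1$. The three alternatives are told apart by invariants of $\CC$: (i) and (iii) both exclude the existence of a coset $S$ as in (ii), and (i) requires $H=\{0\}$ while (iii) requires $H\neq\{0\}$. Exclusivity therefore reduces to showing that (ii) is incompatible with the other two.

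\textbf{The case $H=\{0\}$.} Here $\CC$ is non-valuational, so by Definition \ref{D: cuts} it is either rational or irrational.
\begin{itemize}
\item If $\CC$ is irrational, we are in (i).
\item If $\CC$ is rational, let $s=\sup C_1\in M$. Then $C_1=\{x:x\le s\}$ or $C_1=\{x:x<s\}$, which is (ii) with $S=\{s\}$, a coset of the trivial group.
\end{itemize}
For exclusivity: if (ii) holds with $H=\{0\}$, then $S=\{s\}$ and $s=\sup C_1$, so $\CC$ is rational and not (i). Case (iii) is excluded outright since it demands $H\neq\{0\}$.

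\textbf{The case $H\neq\{0\}$.} Since $C_1$ is a union of $H$-cosets, $\pi(C_1)$ is an initial segment of $M/H$ and $C_1=\pi^{-1}(\pi(C_1))$. Both $\pi(C_1)$ and its complement $\pi(C_2)$ are nonempty, so this is a genuine cut of $M/H$. I then check that this cut is non-valuational. Suppose $\pi(\delta)+\pi(C_1)=\pi(C_1)$. Pulling back through $\pi$ gives $\delta+C_1=C_1$, so $\delta\in H$ and $\pi(\delta)=0$. Hence $G(\pi(C_1))$ is trivial.

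It remains to decide whether $\pi(C_1)$ is rational.
\begin{itemize}
\item If $\pi(C_1)$ is irrational, we are in (iii).
\item If it is rational with supremum $\pi(s)$, put $S=s+H$. Then $C_1=\{x:x\le S\}$ when $\pi(s)\in\pi(C_1)$, and $C_1=\{x:x<S\}$ otherwise. Both identities follow directly from $C_1=\pi^{-1}(\pi(C_1))$ together with the convexity of $H$. This is (ii).
\end{itemize}
For exclusivity here: if (ii) holds, then $\pi(S)$ is the supremum of $\pi(C_1)$ in $M/H$, so $\pi(C_1)$ is rational and (iii) fails.

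The only point needing care, and the one I expect to be the real check, is that the coset $S$ in (ii) is a coset of the stabilizer $H$ itself and not of some other convex subgroup. Suppose $C_1=\{x\,\Box\, S\}$ for a coset $S$ of some convex subgroup $K$. Then $C_1$ is $K$-invariant, so $K\subseteq H$. Conversely, $H$-invariance of $C_1$ forces $S$ to be a union of $H$-cosets, since $S$ is the boundary block of $C_1$. This pins down $K=H$, so the statement is consistent with $H=G(\CC)$. Everything else in the argument is routine unwinding of the definitions.
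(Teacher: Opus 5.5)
Your proof is correct and is exactly the routine unwinding of the definitions that the paper has in mind when it calls this lemma immediate; the paper gives no written proof. The only slip is that for $C_1=\{x<S\}$ you assert $\sup C_1=s$ (resp.\ $\sup\pi(C_1)=\pi(S)$), which fails when $M$ (resp.\ $M/H$) is discretely ordered, since there the supremum is the predecessor; a supremum still exists, so exclusivity is unaffected, and your last paragraph is superfluous because (ii) already stipulates a coset of $H$, which your construction provides.
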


\begin{definition}
	A valuational cut $\CC=(C_1,C_2)$ is called \textit{of type (iii)}, if $\pi(C_1)$ is irrational non-valuational in the quotient group $M/G(C_1)$.
\end{definition}

\begin{remark}
    The literature contains finer classification of cuts, for example see \cite[Figure 1]{FVKNar}. In the terminology of \cite{FVKNar}, type (iii) cuts are non-ball cuts with non-trivial invariance group. In the terminology of \cite{Tressl}, type (iii) cuts are cuts with sign $0$ and non-trivial invariance group.
\end{remark}

\begin{example}
    Consider $M=(\mathbb{Q},+)^2$ with the lexicographic ordering. The cut $\mathcal{C}$ induced by $(\sqrt{2},0)$ is of type (iii). Indeed, $G(\mathcal{C})=0\times\mathbb{Q}$ and the induced cut on $M/G(\mathcal{C})$ is irrational (see \cite[Ex3]{FVKNar}).

    An example of a cut of type (iii) in a weakly o-minimal expansion of a field can be found in \cite[Example 3.11(C)]{Tressl-modcomp}.
\end{example}

Since every definable subset of a weakly o-minimal structure is a finite boolean combination of definable initial segments we conclude:

\begin{lemma}\label{L: definable sets in wom}
	If $\CM=(M,+,<,\dots)$ is a weakly o-minimal expansion of an ordered group then every definable subset of $M$ is a boolean combination of definable initial segments as in Lemma \ref{L:all cuts} (i)-(iii).
\end{lemma}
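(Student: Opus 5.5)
We first reduce the statement to definable initial segments. By weak o-minimality, a definable $X\subseteq M$ is a finite union of convex sets $I_1,\dots,I_k$. Each convex set $I_j$ is the difference of two initial segments: $I_j=A_j\setminus B_j$, where $A_j=\{x: \exists y\in I_j\ x\le y\}$ and $B_j=\{x: x< I_j\}$. Both $A_j$ and $B_j$ are definable, with the same parameters as $X$, since the $I_j$ are the convex components of $X$ and hence definable over those parameters. The trivial initial segments $\emptyset$ and $M$ can be written as $M\setminus M$ and $M$ itself, or handled directly as boolean combinations. This gives $X$ as a boolean combination of definable non-trivial initial segments $C_1$. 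It then remains to check that each such $C_1$, viewed as the lower part of a definable cut $\CC=(C_1,C_2)$, falls under one of the cases of Lemma \ref{L:all cuts}.

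That is immediate: Lemma \ref{L:all cuts} applies to every cut in $M$, and its trichotomy (i)/(ii)/(iii) is exhaustive. The only point to verify is that the objects appearing in the lemma remain within the definable world. The invariance group $H=G(C_1)=\{\delta: \delta+C_1=C_1\}$ is definable from $C_1$. In case (ii), the coset $S$ is definable too: it is the coset of $H$ determined as the "boundary" coset, namely $S=\{x: x+H\not\subseteq C_1,\ x+H\cap C_1\neq\emptyset\}$ or the last/first coset. So every initial segment in the decomposition is a definable initial segment of type (i), (ii) or (iii).

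There is no real obstacle here. The only mild care needed is the bookkeeping of the trivial initial segments and noting that the convex components are uniformly definable, which follows from the convex-component description in weakly o-minimal structures.
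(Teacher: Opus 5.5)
Your argument is correct and is the same as the paper's, which simply observes that by weak o-minimality every definable $X\subseteq M$ is a boolean combination of definable initial segments, and then applies the exhaustive trichotomy of Lemma \ref{L:all cuts}. One small slip, in an aside the statement does not need: since $C_1$ is $H$-invariant, no coset of $H$ meets both $C_1$ and its complement, so your set $\{x: x+H\not\subseteq C_1,\ (x+H)\cap C_1\neq\emptyset\}$ is always empty; the ``last coset of $C_1$ or first coset of $C_2$'' description is the correct one.
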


The next lemma will not be used in the sequel, but may be of independent interest. It is a generalization of \cite[Lemma 3.1]{HaHaPeSs}. For a convexly valued ordered field $\CK$ we say that the value group $\Gamma$ (resp. the residue field $\bk$) is \emph{definably complete} if any $\CK$-bounded definable subset of $\Gamma$ (resp. $\bk$) has a supremum. With this terminology we have: 

\begin{lemma}\label{L: o-minimal group and res}
	Assume that $(\CK,+,\cdot,<,\CO,\dots)$ is an expansion of a convexly valued ordered field, such that 
	$\bk$ and $\Gamma$ are definably complete.
   
	Every non-trivial definable convex subgroup of $(K,+)$ is a ball.  
    
    For any valuational cut $\CC=(C_1,C_2)$ in $K$ of type (iii), $G(\CC)$ is a closed ball.
\end{lemma}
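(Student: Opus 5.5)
Let $v$ denote the valuation, $\Gamma=v(K^\times)$, $\CO$ the valuation ring, $\m$ its maximal ideal, and $\bk=\CO/\m$. For $\gamma\in\Gamma$ write $B_{\ge\gamma}=\{x:v(x)\ge\gamma\}$ and $B_{>\gamma}=\{x:v(x)>\gamma\}$ for the closed and open balls around $0$. Every convex subgroup of $K$ and every $G(\CC)$ is a subgroup containing $0$, so "ball" here means a ball around $0$. The plan is to reduce the statement about subgroups of $K$ to a statement about cuts in $\Gamma$ or in $\bk$, and then use definable completeness.

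\textbf{First claim.} Let $H\neq\{0\}$ be a definable convex subgroup, and put $S=v(H\setminus\{0\})$. This is a definable subset of $\Gamma$.
\begin{itemize}
\item $S$ is a final segment of $\Gamma$, because $H$ is convex and symmetric.
\item $H$ is a union of balls: if $x\in H$ and $v(y)\ge v(x)$, then $y\in H$. Indeed, $|y|\le |x|\cdot u$ for some unit $u$, and we may not have $|y|\le|x|$ directly. So we argue via residues. Given $v(y)=v(x)$, we have $y=xu$ with $u\in\CO^\times$. By convexity it suffices to show $x\cdot\CO\subseteq H$. The set $T=\{a\in\CO: xa\in H\}$ is convex and contains $\m$. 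So either $x\CO\subseteq H$, or the image of $T$ in $\bk$ is a proper definable convex subgroup of $(\bk,+)$ containing $1$.
\end{itemize}
In the second case, definable completeness of $\bk$ applies: a bounded nonzero definable convex subgroup $A$ of $\bk$ has a supremum $s>0$. Then $A$ is $(-s,s)$ or $[-s,s]$, and neither is closed under addition, since $s/2+s/2$ with $s/2\in A$ gives $s$, and $s$ is then doubled. This is a contradiction, so $A=\bk$. Hence $H=\bigcup_{\gamma\in S}B_{\ge\gamma}$.

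If $S=\Gamma$, then $H=K$. Otherwise $S$ is bounded below, so by definable completeness of $\Gamma$ it has an infimum $\gamma_0$. Then $H=B_{\ge\gamma_0}$ if $\gamma_0\in S$, and $H=B_{>\gamma_0}$ otherwise. In both cases $H$ is a ball. The only subtle point is the case where $S$ is a final segment with no infimum in $\Gamma$ itself; definable completeness rules this out, because $S$ is bounded below by the value of any element outside $H$.

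\textbf{Second claim.} Let $\CC$ be a valuational cut of type (iii). Then $G=G(\CC)$ is a nontrivial definable convex subgroup, so by the first claim it is a ball, either $B_{\ge\gamma}$ or $B_{>\gamma}$. It remains to exclude the open ball $G=B_{>\gamma}$.

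Suppose $G=B_{>\gamma}$. Rescale by $c$ with $v(c)=\gamma$. This gives a definable cut $\CC'=c^{-1}C_1$ with $G(\CC')=\m$. Then $\pi(\CC')$ is an irrational non-valuational cut in $K/\m$, and $\CO/\m=\bk$ sits inside $K/\m$ as a convex subgroup. We split into two cases.

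\begin{itemize}
\item \textbf{The cut meets $\CO/\m$ nontrivially.} Then $\pi(C'_1)\cap \bk$ is a definable initial segment of $\bk$ that is bounded and nonempty. Definable completeness gives it a supremum $s\in\bk$. Then $\pi(\CC')$ is realized at $s$, that is, it is rational. This is a contradiction.
\item \textbf{The cut lies entirely outside $\CO/\m$.} Let $X$ be the set of valuations of elements lying between $0$ and the cut, relative to a point $a\in C'_1$. Definable completeness of $\Gamma$ gives an infimum $\delta$ of the valuations of the differences $d-a$ with $d\in C'_2$. Translate by $a$ and rescale by an element of value $\delta$. This reduces to the first case, or shows that the stabilizer is larger than $\m$. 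Either outcome is a contradiction.
\end{itemize}

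This second case, a cut far from $0$, is the main obstacle. The key computation there is a careful choice of the translation point, so that the differences land in $\CO$ without landing in $\m$.
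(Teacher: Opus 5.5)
Your first part is correct. It uses the same two ingredients as the paper in the opposite order. You first use definable completeness of $\bk$ to get $x\CO\subseteq H$ for all $x\in H$, so $H$ is a union of closed balls, and only then take the infimum of $v(H\setminus\{0\})$ in $\Gamma$. The paper first uses $\Gamma$ to trap $H$ between $B_{>r_0}(0)$ and $B_{\ge r_0}(0)$, and then applies $B_{\ge r_0}(0)/B_{>r_0}(0)\cong\bk$. Your ordering is, if anything, cleaner. Your Case 1 of the second part is exactly the paper's argument.

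The gap is Case 2. You flag it as the main obstacle but do not resolve it, and the mechanism you sketch fails as stated. First, $\{v(d-a): d\in C_2'\}$ is not bounded below (take $d$ large), so it has no infimum. More seriously, rescaling by $c$ with $v(c)=\delta\neq 0$ replaces the stabilizer $\m$ by $c^{-1}\m$, a different open ball, so you are no longer in the situation of Case 1.

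Completeness of $\Gamma$ plays no role here, and the case split is unnecessary. The missing observation is that $G(\CC')=\m\subsetneq\CO$, so some positive unit $z$ is not in $G(\CC')$. Since $C_1'\subseteq z+C_1'$ holds automatically for $z>0$, there is $a\in C_1'$ with $a+z\notin C_1'$. Equivalently, $\pi(\CC')$ is non-valuational in $K/\m$, so the nontrivial convex subgroup $\bk=\CO/\m$ cannot stabilize it. This is the precise form of your dichotomy "some difference $d-a$ is a unit, or $\CO\subseteq G(\CC')$".

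Now translate by $-a$ instead of rescaling. Translation changes neither the stabilizer (the group is abelian) nor the type of the cut. The new initial segment $C_1'-a$ contains $0$ but not $z$, both in $\CO$. Since $C_1'-a$ is a union of $\m$-cosets, its image in $K/\m$ contains $0$ and lies below $\pi(z)\in\bk$. So Case 1 applies: definable completeness of $\bk$ gives a supremum $s\in\bk$, and by convexity of $\bk$ in $K/\m$ this $s$ is the supremum in $K/\m$, contradicting irrationality.

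This is precisely the paper's route. It picks $x\in C_1$ and $y\notin C_1$ with $v(y-x)=0$ (this is where $\m\neq\CO$ is used) and translates the ball $x+\CO$ onto $\CO$.
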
 
\begin{proof}
    Let $H$ be a non-trivial definable convex subgroup of $(K,+)$. Consider balls of the form $B_{\geq r}(0)$ and $B_{>r}(0)$ for $r\in \Gamma$. As $H$ is a convex subgroup it is comparable by inclusion to every ball (open or closed).  If $B_{\geq r}(0)\subseteq H$ for every $r\in \Gamma$ then $H=K$, so the set  $\{r:B_{\geq r}(0)\sub H\}$ must be bounded below. By definable completeness of $\Gamma$, it has an infimum $r_0$.
    If $r_0$ is not a  minimum then it is easy to see that $H=B_{>r_0}(0)$ an open ball, so assume $r_0$ is a minimum. Namely, $B_{>r_0}(0)\subsetneq H\subseteq B_{\geq r_0}(0)$.

     As $B_{\ge r_0}(0)/ B_{> r_0}(0)\cong \bk$, $H$ is mapped via this isomorphism onto a non-zero convex subgroup of $(\bk, +)$, which by the definable completeness of $\bk$ must equal to $\bk$, hence   $H=B_{\geq r_0}(0)$. Thus, $H$ is either a closed or an open ball.

	Now assume that $C_1$ is a valuational initial segment of type (iii). So $H=G(C_1)$ is nontrivial convex and $\pi(C_1)\subseteq K/H$ is irrational and non-valuational. After rescaling we may assume that $H$ is a ball of radius $0$. We claim that $H=\CO$. 
    
    Indeed, if $H=\m$, the maximal ideal, then there exists $z\in \CO^\times$ which does not lie in $H$, i.e. there are elements $x\in C_1$ and $y\notin C_1$ such that $v(y-x)=0$. So there is a closed ball of radius $0$  containing both $x$ and $y$. Translating, we may assume that this closed ball is $\CO$. Thus, $\pi:K\to K/\m$ sends $\CO$ onto $\bk$ and sends $C_1\cap \CO$ to a nontrivial cut in $\bk$. By definable completeness of $\bk$, $\pi(C_1)$ has a supremum, contradicting the assumption that it is irrational.
\end{proof}    

Keeping the notation of the lemma, if $\CK$ is weakly o-minimal, so are $\Gamma$ and $\bk$. In this setting, $\Gamma$ and $\bk$ are definably complete if and only if they are o-minimal (and therefore stably embedded, see \cite{HaOn2a}).

 Below, we say that a cut $\CC$ is called {\em a ball-cut} if there exists a ball $B\sub K$ such that $C=\{x\in K: x\, \square\, B\}$, where $\square \in \{<,\leq \}$.
\footnote{In \cite{FVKNar} the notion of ball-cut is defined similarly, allowing $B$ to be any convex subgroup of $(K,+)$.}
 By what we have seen thus far, we may conclude:
\begin{corollary}
	If $\CK$ is a weakly o-minimal expansion of a valued field and both $\Gamma$ and $\bk$ are o-minimal then every definable subset of $K$ is a boolean combination of non-valuational cuts, ball-cuts and valuational cuts of type (iii), with stabilizers a closed ball.
\end{corollary}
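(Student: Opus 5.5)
We would obtain the corollary by combining Lemma \ref{L: definable sets in wom} with Lemma \ref{L: o-minimal group and res}. First, since $\CK$ is weakly o-minimal, in particular a weakly o-minimal expansion of the ordered group $(K,+,<)$, Lemma \ref{L: definable sets in wom} shows that every definable subset of $K$ is a boolean combination of definable initial segments. Each such segment is of one of the types (i)--(iii) of Lemma \ref{L:all cuts}. It then suffices to show that each definable initial segment $C_1$, with $H=G(C_1)$, is one of the three listed kinds. We would also check that $H$ is definable: it is $\{\delta: \delta+C_1=C_1\}$, which is definable from $C_1$.

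Next we need the hypotheses of Lemma \ref{L: o-minimal group and res}. Both $\Gamma$ and $\bk$ are o-minimal, so any $\CK$-definable subset of either is a finite union of points and intervals. Hence every bounded such subset has a supremum, i.e.\ $\Gamma$ and $\bk$ are definably complete. We would note here that the relevant sets are $\CK$-definable subsets of the interpretable sorts, and that o-minimality is meant with respect to the $\CK$-induced structure, as in the remark preceding the corollary.

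We then go through the cases of Lemma \ref{L:all cuts}.
\begin{itemize}
\item In case (i) the cut is non-valuational, so it is one of the allowed kinds.
\item In case (ii), if $H=\{0\}$ the cut is again non-valuational (rational).
\item In case (ii) with $H\neq\{0\}$, the set $C_1=\{x: x\,\Box\, S\}$ with $S=a+H$ a coset. Since $H$ is a non-trivial definable convex subgroup, Lemma \ref{L: o-minimal group and res} shows that $H$ is a ball around $0$. Hence $S=a+H$ is a ball and $C_1$ is a ball-cut.
\item In case (iii), $C_1$ is valuational of type (iii), and the second part of Lemma \ref{L: o-minimal group and res} gives that $G(C_1)$ is a closed ball.
\end{itemize}

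The main point needing care is the verification of definable completeness. It depends on how the weak o-minimality of $\CK$ is transferred to $\Gamma$ and $\bk$, which the text just before the corollary already records. The remaining steps are bookkeeping.
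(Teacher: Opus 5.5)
Your proposal is correct and is essentially the paper's own (implicit) argument: combine Lemma~\ref{L: definable sets in wom} with Lemma~\ref{L: o-minimal group and res}, whose definable-completeness hypothesis follows immediately from o-minimality of $\Gamma$ and $\bk$ with respect to $\CK$-definable sets, and then sort the definable initial segments by the trichotomy of Lemma~\ref{L:all cuts}, exactly as you do. Two minor points: in case (ii) the stabilizer $H$ is a proper subgroup of $K$ because the cut is non-trivial, which the first part of Lemma~\ref{L: o-minimal group and res} tacitly needs; and no transfer of weak o-minimality is involved in definable completeness, since it comes straight from the o-minimality hypothesis.
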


\section{Uniform finiteness in $\CM^{\eq}$}
 In this section we prove the main theorem stated in the introduction. For that, we first refine the definition of uniform finiteness from the introduction; it is based on Johnson's  \cite{JohnUF}. 

\begin{definition}\label{D: uniform}
    Let $\CM$ be a sufficiently saturated structure.
    \begin{enumerate}
        \item A definable (by our convention, possibly interpretable) set $X$ \emph{has uniform finiteness} if for every definable family $\{Y_t\subseteq X:t\in T\}$ there is a natural number $n\in \mathbb{N}$ such that for all $t\in T$: $|Y_t|=\infty \iff |Y_t|>n.$
        \item Let $\CD$ be a (not necessarily definable) collection of definable subsets of $\bigcup_n M^n$. A definable set $X$ is \emph{a set of imaginaries from $\CD$} if there is a definable relation $R\subseteq X\times M^n$ such that
        \begin{enumerate}
            \item $x\mapsto R_x:=\{m\in M^n:(x,m)\in R\}$ is injective, and
            \item for every $x\in X$, $R_x\in \CD$.
    \end{enumerate}
    \item 
          We say that \emph{$\CD$ has uniform finiteness} 
         if any definable set of imaginaries from $\CD$ has uniform finiteness.
         \end{enumerate}

\end{definition}

With this definition, Johnson's theorem \cite[Theorem 2.3]{JohnUF} states  that $\CM^{\eq}$ has uniform finiteness if and only if the collection of all definable subsets of $M=M^1$ has uniform finiteness. When $\CM$ is weakly o-minimal we can reduce to an even simpler collection of sets, as discussed below.  \\

\textbf{From now on we fix  $\CM=(M,+,<,\dots)$, a sufficiently saturated expansion of a weakly o-minimal ordered group}. 

\begin{lemma}\label{L: initial segments}
   If the collection of initial segments  of $M$ has uniform finiteness then $\CM^{\eq}$ has uniform finiteness.
\end{lemma}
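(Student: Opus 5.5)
By Johnson's theorem \cite[Theorem 2.3]{JohnUF}, it suffices to show that the collection of all definable subsets of $M$ has uniform finiteness. So fix a definable set $X$ of imaginaries from this collection, with $R\subseteq X\times M$ such that $x\mapsto R_x$ is injective. Fix also a definable family $\{Y_t\subseteq X: t\in T\}$. We must find $n$ with $|Y_t|=\infty\iff|Y_t|>n$.

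The plan is to encode each $R_x$ by a bounded tuple of initial segments. By weak o-minimality and compactness (using saturation), there is $N$ such that every $R_x$ is a union of at most $N$ convex sets. Each $R_x$ is then determined by its boundary cuts. Concretely, take the $2N$ definable initial segments $I^1_x\subseteq\dots\subseteq I^{2N}_x$, where $I^j_x$ is the set of points lying below the $j$-th convex component, or below and including it, as appropriate. By Lemma \ref{L: definable sets in wom} these can be chosen uniformly definably in $x$, together with a finite amount of combinatorial data $\sigma(x)$. This data records how many components there are and which boundaries are open or closed; there are only finitely many possibilities, and $\sigma$ is definable. The map $x\mapsto(\sigma(x),I^1_x,\dots,I^{2N}_x)$ is then injective and definable. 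Partitioning $X$ into the finitely many definable pieces $X_\sigma$, we may assume $\sigma$ is constant. Note that a finite union of families with uniform finiteness has uniform finiteness: a bound $n_\sigma$ for each piece gives the bound $\sum_\sigma n_\sigma$.

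Next, for each $j$ let $X^j=\{I^j_x: x\in X\}$ be the set of codes of initial segments. It is a definable set of imaginaries from the collection of initial segments, via the relation $(c,m)\mapsto m\in c$, so by hypothesis it has uniform finiteness. Then $X$ injects definably into $X^1\times\dots\times X^{2N}$. It remains to check two standard facts:
\begin{enumerate}
\item uniform finiteness passes to definable subsets and to definable injective images;
\item uniform finiteness of $A$ and $B$ implies uniform finiteness of $A\times B$.
\end{enumerate}

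For the second fact, take a family $\{Z_t\subseteq A\times B\}$ and let $p_A$, $p_B$ be the projections. Then $Z_t$ is infinite if and only if either $p_A(Z_t)$ is infinite, or some fiber $(Z_t)_a$ is infinite. By uniform finiteness of $A$ and $B$, both conditions are definable in $t$ with bounds $n_A$ and $n_B$. If neither holds, then $|Z_t|\le n_A n_B$, which gives the bound.

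The main obstacle I expect is making the encoding uniformly definable. One must check that the endpoints of the convex components are coded by definable initial segments in a uniform way across $x$, and that the case distinctions (open or closed boundary, unbounded ends) contribute only finitely many definable pieces. Everything else is routine bookkeeping.
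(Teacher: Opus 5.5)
Your proof is correct, but the bookkeeping differs from the paper's. Both arguments reduce, via Johnson's theorem, to sets of unary imaginaries. Both then code a definable $R_x\subseteq M$ by the initial segments $\{y: y<D\}$ and $\{y: y\le D\}$, one pair for each convex component $D$.

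The paper does not fix a length for this code. For a set $Y$ of unary codes it takes the union $C(Y)=\bigcup_{x\in Y}C(R_x)$ of all boundary segments that occur. It observes that $Y$ is finite iff $C(Y)$ is finite, since every member of $Y$ is a boolean combination of elements of $C(Y)$, so $|Y|\le 2^{2^{|C(Y)|}}$. It then applies the hypothesis directly to the definable family $\{C(Y_t)\}_{t\in T}$.

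You proceed differently:
\begin{itemize}
\item You use a uniform bound $N$ on the number of components, which compactness provides because the theory is weakly o-minimal.
\item You split $X$ according to the number of components.
\item You get a definable injection of each piece into $X^1\times\dots\times X^{2N}$.
\item You conclude by closure of uniform finiteness under subsets, definable bijections, finite unions and finite products. Your product argument via projections and fibers is fine.
\end{itemize}

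The paper's route is shorter and needs neither the bound $N$ nor a product lemma. Yours gives a more structural statement: every set of unary imaginaries is piecewise definably embedded in a finite product of sets of codes of initial segments. The closure facts you isolate are also reusable.

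Two small corrections.
\begin{itemize}
\item The uniform definability of the $I^j_x$ does not come from Lemma \ref{L: definable sets in wom}, which says nothing about uniformity. It is immediate anyway: $a,b$ lie in the same component of $R_x$ iff the closed interval between them is contained in $R_x$, so the $j$-th component is uniformly definable in $x$.
\item Recording open/closed boundaries in $\sigma$ is unnecessary, since $D=\{y: y\le D\}\setminus\{y: y<D\}$. Only the number of components needs to be recorded.
\end{itemize}
What you flagged as the main obstacle is therefore not a real one.
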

\begin{proof}
By Johnson's theorem, it suffices to show that the collection of definable subsets of $M$ has uniform finiteness. To each definable $X\sub M$ we let $C(X)$ denote the (finite) collection  of 
initial  segments  determined by its convex components (each convex set determines two initial segments).  For a set of unary codes $Y$ we let $C(Y):=\bigcup\limits_{X\in Y} C(X)$. 

Let us verify that $Y$ is finite if and only if $C(Y)$ is finite. Indeed, a set $C(Y)$ of $n$ initial segments gives rise to at most $f(n)=2^{2^n}$ boolean combinations of these segments, thus there are at most $f(n)$-many definable sets $X\sub M$ whose code belongs to $Y$, namely $|Y|\leq f(n)$. The other direction is obvious. 

This implies that uniform finiteness of the collection of initial segments implies uniform finiteness of the collection of definable subsets of $M$, as claimed. 
\end{proof}

Our goal here is to establish which  definable sets in $\CM^{eq}$ necessarily have uniform finiteness.
An obvious obstruction  is the existence of a definable infinite discrete linear order. We start by noting that this is, in fact, the only obstacle to $\CM^{eq}$ having uniform finiteness. 

\begin{lemma}\label{L: eliminating infty equiv nondiscrete}
	$\CM^{\eq}$ does not have uniform finiteness if and only if there exists a definable convex equivalence relation $E$ on $M$ such that $(M/E,<)$ infinite and discrete.
\end{lemma}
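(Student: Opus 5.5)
The two directions are of very different difficulty.

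\emph{The easy direction.} Suppose $E$ is a definable convex equivalence relation on $M$ with $(M/E,<)$ infinite and discrete. Use discreteness to define, uniformly in $a,b\in M/E$, the interval $Y_{a,b}=\{c\in M/E: a\le c\le b\}$. Every such interval is finite: in an infinite discrete linear order with a definable successor function, an interval that is infinite would, by compactness and saturation, be witnessed by a nonstandard number of successor steps. Yet the intervals have unbounded finite size, since starting from any element of the infinite discrete order we can take $n$ successive successors for every $n$. Hence $\{Y_{a,b}\}$ violates uniform finiteness in $\CM^{\eq}$.

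\emph{The hard direction.} Assume uniform finiteness fails in $\CM^{\eq}$. By Lemma \ref{L: initial segments}, it then fails for some definable set $X$ of imaginaries coding initial segments of $M$. Concretely, there is a definable family $\{Y_t\subseteq X\}$ in which each $Y_t$ is finite but the sizes $|Y_t|$ are unbounded.

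\begin{enumerate}
\item By saturation, pick $t$ with $|Y_t|$ nonstandard-large in a suitable elementary extension. Since initial segments are linearly ordered by inclusion, $Y_t$ is a finite chain of initial segments, and hence carries a definable successor.
\item This forces the definable linear order $(X,\subseteq)$ to contain arbitrarily long finite definable chains, each with a definable successor. Using the successor map on these chains, I would define a uniform family of elements $x_1<\dots<x_N$ in $M$, with $N$ arbitrarily large, lying in the consecutive differences $C_{i+1}\setminus C_i$ of the chain.
\item From this data I would then extract a convex equivalence relation. The natural candidate is: $x\,E\,y$ iff $x$ and $y$ lie in the same difference $C_{i+1}\setminus C_i$ for the chain through them. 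To make this a single definable object, I would use weak o-minimality, which says that the sets $\{x\in M : C_1\le x\text{-defined sets}\}$ are finite unions of convex sets. This should turn the family of chains into one convex equivalence relation on a definable convex subset $I\subseteq M$ whose quotient has arbitrarily large finite discrete pieces. Compactness then yields an infinite discrete quotient.
\item Finally, I would extend $E$ to all of $M$ by collapsing the complement of $I$ into its two end classes.
\end{enumerate}

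The main obstacle is step (3): producing a single definable convex equivalence relation uniformly, rather than one per parameter $t$. If the direct construction fails, the first thing I would try is to fix a single $t$ in a saturated model with $Y_t$ pseudo-finite of nonstandard size and take $E$ to be the partition of $M$ into the differences of that chain. This $E$ is definable over $t$ and convex, and its quotient is order-isomorphic to a discrete order of nonstandard finite size, hence infinite. If a pseudo-finite rather than genuinely infinite quotient turns out not to suffice, I would instead apply compactness to a type asserting that a chain of $X$ has a definable successor and no endpoints.
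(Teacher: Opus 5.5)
Your fallback in the last paragraph is exactly the paper's proof. The paper fixes $t_0$ with $Y_{t_0}$ infinite and discretely ordered, and sets $x_1\,E\,x_2$ iff $\{y\in Y_{t_0}:x_1\in y\}=\{y\in Y_{t_0}:x_2\in y\}$; this is precisely your partition of $M$ into consecutive differences of the chain. It works as you state it. A pseudo-finite chain of nonstandard size is genuinely infinite, and since $\CM$ is saturated, $t_0$ can be taken in $\CM$, so the worry in your last sentence is moot. Steps (2)--(4) should simply be dropped: nothing uniform in $t$ is needed, and as written the relation ``same difference for the chain through them'' is not even well defined until a single parameter $t$ is fixed.

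There are two corrections. First, in the easy direction, the claim that every interval of an infinite discrete order is finite is false. In a saturated elementary extension of $(\mathbb{Z},<)$, the interval $[0,a]$ with $a$ nonstandard is infinite, and compactness does not give you finiteness. The claim is also unnecessary: uniform finiteness fails as soon as the family of intervals has finite members of unbounded size. You get these by iterating successor or predecessor from a suitable non-extremal element, not from an arbitrary one.

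Second, in the hard direction, make ``pseudo-finite'' explicit. By compactness and saturation, choose $t_0$ satisfying $|Y_{t_0}|>n$ for all $n$, together with the first-order properties shared by all finite $Y_t$:
\begin{itemize}
\item $Y_{t_0}$ has a least and a greatest element;
\item every non-maximal (resp.\ non-minimal) element has an immediate successor (resp.\ predecessor);
\item for every $x\in M$, the set $C(x)=\{y\in Y_{t_0}:x\in y\}$ is empty or has a least element.
\end{itemize}
The last clause is what makes the consecutive differences cover $M$. An $x$ with $C(x)$ nonempty and without a least element lies in none of them, and under the paper's $E$ its class has no immediate successor. So this clause is exactly what makes $M/E$ discrete; the paper's phrase ``infinite and discretely ordered'' leaves it implicit.
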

\begin{proof}
	Infinite discrete linear orders fail uniform finiteness (as witnessed by the family of intervals). So interpretable discrete linear orders contradict uniform finiteness for $\CM^{eq}$. 
    
Conversely, if $\CM^{\eq}$ does not have uniform finiteness, we prove that $\CM$ interprets an infinite discrete linear order.  By Lemma \ref{L: initial segments}, failure of uniform finiteness appears in some definable family of sets of initial segments of $M$. The initial segments are linearly ordered by inclusion. If $\{Y_t\}_{t\in T}$ is a family of codes such that each $Y_t$ consists of codes of initial segments, then each $Y_t$ is linearly ordered, uniformly in $t$. In particular, if $Y_t$ is finite it is discrete with respect to its natural ordering. 

Thus, if $\{Y_t\}_{t\in T}$ fails uniform finiteness, by compactness and saturation, there is some $t_0\in T$ such that $Y_{t_0}$ is infinite and discretely ordered. Given such a set $Y_{t_0}$, denote for $x\in M$ the set $C(x):=\{y\in Y_{t_0}: x\in y\}$.  The  equivalence  relation $E(x_1,x_2)$ on $M$ given by $C(x_1)=C(x_2)$ has the desired properties 
\end{proof}

\subsection{Uniform finiteness in groups}

Lemma \ref{L: initial segments} reduces uniform finiteness in $\CM^{\eq}$ to uniform finiteness of the collection of all initial segments of $M$. The aim of this section is to reduce it further to the collection of all convex subgroups. We first prove this important observation.

\begin{lemma}\label{L: eliminating non-valuational}
The collection of all non-valuational (possibly rational) definable cuts has uniform finiteness. 
\end{lemma}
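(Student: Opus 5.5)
The plan is to argue by contradiction, using the reduction from the proof of Lemma \ref{L: eliminating infty equiv nondiscrete} together with the fact that $M$ itself has uniform finiteness, which holds because $\CM$ is weakly o-minimal. Suppose a definable family $\{Y_t\}_{t\in T}$ of sets of codes of non-valuational definable initial segments fails uniform finiteness. As in that proof, saturation and compactness give some $t_0$ such that $Y:=Y_{t_0}$ is infinite. Moreover, $Y$ is linearly ordered by inclusion and discretely ordered, since this is a first-order property holding in all the finite $Y_t$. Let $s$ denote the successor function on $Y$ (undefined at a maximum) and $p$ the predecessor function. Both are definable. We will produce a single definable subset of $M$ with arbitrarily many convex components, which contradicts weak o-minimality.

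The key construction uses the group structure. For $\delta>0$, put
\[
Y^\delta=\{C\in Y: p(C) \text{ exists and } p(C)+\delta\subseteq C\},\qquad
X_\delta=\bigcup_{C\in Y^\delta} \bigl(C\setminus (C-\delta)\bigr).
\]
Both are definable uniformly in $\delta$. Because each $C$ is non-valuational, $\delta\notin G(C)$, so $C-\delta\subsetneq C$ and each piece $C\setminus(C-\delta)$ is a nonempty convex set. The condition $p(C)+\delta\subseteq C$ gives $p(C)\subseteq C-\delta$, so $C\setminus(C-\delta)\subseteq C\setminus p(C)$. The sets $C\setminus p(C)$ are pairwise disjoint, and in fact separated: between two of them sits another nonempty difference $C'\setminus p(C')$. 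Hence the pieces of $X_\delta$ are distinct convex components. Consequently $X_\delta$ has at least $|Y^\delta|$ convex components. By uniform finiteness in $M$ (weak o-minimality of the theory), there is a bound $N$ on the number of convex components of $X_\delta$, uniform in $\delta$. So $|Y^\delta|\le N$ for every $\delta>0$.

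It remains to show that the sets $Y^\delta$ exhaust $Y$ minus its possible minimum as $\delta\to 0^+$. They are increasing as $\delta$ decreases, so by saturation some $Y^\delta$ would then have size $>N$, giving the contradiction. This is the step I expect to be the main obstacle. We must show that for each $C\in Y$ with predecessor $D=p(C)\subsetneq C$ there is $\delta>0$ with $D+\delta\subseteq C$. I would first pick $a\in C\setminus D$. If $D$ has a supremum $d$ (rational case), then $d<a$ and $\delta=a-d$ works. If $D$ is irrational, note that $D+\delta\subseteq C$ iff $D+\delta\le a'$ for suitable $a'$. I would use that $D$ is non-valuational to find, for any $\epsilon>0$, some $b\in D$ with $b+\epsilon\notin D$, and then choose $\delta$ small compared with $a-b$ for such $b$ near the cut. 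Failing a direct argument, I would use weak o-minimality of $\{\delta>0: D+\delta\subseteq C\}$: it is a definable convex set, nonempty if and only if $D$ and $C$ do not determine the same cut up to one point. The degenerate case $C=D\cup\{a\}$, with $D$ irrational, needs separate handling. There I would replace $C\setminus(C-\delta)$ by $C\setminus D$ directly, or pass to $s(C)$, so that the pieces are still counted disjointly.
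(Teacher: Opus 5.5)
Your overall strategy (bound the convex components of the uniformly definable sets $X_\delta$, then exhaust $Y$ by the $Y^\delta$) can work, but it breaks at the step you flag as the main obstacle, and your case analysis there is wrong. If $D=p(C)$ is irrational, $C\setminus D$ cannot be a singleton $\{a\}$, because $a$ would then be $\sup D$. So your ``degenerate case with $D$ irrational'' never occurs. Degeneracy happens precisely for the rational pair $D=\{x<a\}$, $C=\{x\le a\}$, and there your claim $d<a$ fails. For such $C$ no $\delta>0$ satisfies $D+\delta\subseteq C$, since $M$ is dense. Hence the $Y^\delta$ need not exhaust $Y$ minus its minimum, and ``replace the piece by $C\setminus D$ or pass to $s(C)$'' is not an argument.

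The missing idea is the paper's first step: uniformly in $t$, split off the rational cuts. A rational cut is determined by its supremum together with one bit, so these cuts map at most two-to-one into $M$, and uniform finiteness of $M$ bounds the finite ones. For the remaining irrational cuts, $C\setminus p(C)$ always contains two points $a_1<a_2$, and then $\delta=a_2-a_1$ works with no further effort. Every $x\in p(C)$ is $<a_1$, so $p(C)+\delta\subseteq\{x<a_2\}\subsetneq C$. Non-valuationality is needed only to make the pieces $C\setminus(C-\delta)$ nonempty, not here.

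There is also a counting error. For consecutive $C$, the sets $C\setminus p(C)$ are adjacent, not separated, and the pieces of $X_\delta$ can merge. If $C$ and $C'=s(C)$ both lie in $Y^\delta$ and $C'=C+\delta$, then $C'\setminus(C'-\delta)=C'\setminus C$. Its union with $C\setminus(C-\delta)$ is the convex set $C'\setminus(C-\delta)$, so ``$X_\delta$ has at least $|Y^\delta|$ convex components'' is unjustified as stated.

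The fix is to require the strict inclusion $p(C)+\delta\subsetneq C$ in the definition of $Y^\delta$. The choice $\delta=a_2-a_1$ above gives this, and the condition is still monotone in $\delta$. Then for consecutive $C'\subsetneq C$ in $Y^\delta$, the set $(C-\delta)\setminus C'$ is nonempty. Any point of it lies outside $X_\delta$ and strictly between the two pieces.

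With both repairs your proof is correct, and it is a genuine variant of the paper's. The paper uses saturation to find one $\alpha>0$ below the gaps of infinitely many consecutive pairs. This produces a single definable set $\bigcup_y \bigl(y\setminus(y-\alpha)\bigr)$ with infinitely many convex components. You instead use the uniform bound on the number of components in the family $\{X_\delta\}_{\delta>0}$, and only ever need the minimum of finitely many $\delta$'s.
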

\begin{proof} 
Assume that $\{Y_t:t\in T\}$ is a definable family of sets of unary codes, each consisting of (codes of) non-valuational initial segments.  For each $t\in T$ denote $I_t:=\{y\in Y_t: y \text{ is a code of a rational cut}\}$ and $J_t$ its complement in $Y_t$. By uniform finiteness in $M$ there is $n\in \Nn$ such that $|I_t|<n$ if and only if $|I_t|<\infty$. So we may assume that $Y_t=J_t$ for all $t$.

As in the proof of Lemma \ref{L: eliminating infty equiv nondiscrete}, compactness and saturation imply that if there are finite $Y_t$ of unbounded size, then there is $t_0$ such that $Y_{t_0}$ is discrete with respect to inclusion. Let us see that this contradicts weak o-minimality. 

Let $\{y_i\}_{i\in \Nn}\sub Y_{t_0}$ be such that for all $i\in \Nn$ $y_{{i+1}}$ is an immediate successor in $Y_{t_0}$ of $y_{i}$.  By compactness, there is $\alpha>0$ in $M$,  such that for all $i$ there are $y_i<x_1<x_2<y_{i+1}$ with $x_2-x_1>\alpha$. 
For $y\in Y_{t_0}$ let $s(y)$ denote the successor of $y$ in $Y_{t_0}$. By our choice of $\alpha$, the set of  $y\in Y_{t_0}$, such that  there are $y<x_1<x_2<s(y)$ with $x_2-x_1>\alpha$ is infinite (since it contains $\{y_i\}_{i\in \Nn}$). 

For every code $y\in Y_{t_0}$, let $D_y$ be the convex subset of $M$ given by 
$\{x\in M: x<y< x+\alpha \}$. Since $y$ is non-valuational, $D_y\neq \0$. 
Then $\bigcup\{D_y: y\in Y_{t_0}\}$ is a definable set consisting of infinitely many convex components. This contradicts weak o-minimality.  
\end{proof}

\begin{lemma} \label{L: eliminating type (iii)}
	Assume that the collection of definable convex subgroups of $M$ has uniform finiteness. Then the collection of definable valuational initial segments of type (iii) has uniform finiteness.
\end{lemma}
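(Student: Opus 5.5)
Suppose $\{Y_t:t\in T\}$ is a definable family of sets of unary codes, each $Y_t$ consisting of codes of definable valuational initial segments of type (iii). The plan is to reduce to the stabilizers, using the hypothesis on convex subgroups, and then to apply Lemma \ref{L: eliminating non-valuational} in the quotient.

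The map $y\mapsto G(y)$ is definable uniformly in $t$. So $Z_t:=\{G(y):y\in Y_t\}$ is a definable family of sets of imaginaries from the collection of definable convex subgroups. By hypothesis there is $n_1$ such that $|Z_t|>n_1$ implies $Z_t$ is infinite. If $Y_t$ is infinite with $Z_t$ finite, then some fibre $Y_{t,H}:=\{y\in Y_t: G(y)=H\}$ is infinite. It therefore suffices to bound uniformly the finite fibres: if each finite fibre has size at most $n_2$, then a finite $Y_t$ has size at most $n_1n_2$. The family of fibres $\{Y_{t,H}\}_{(t,H)}$ is again a definable family, so it suffices to treat families in which all members of a given $Y_t$ share the same stabilizer $H$.

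In that case we pass to $M/H$, which is interpretable and is again a (weakly o-minimal) ordered group, uniformly in the parameter $H$. Each $y\in Y_{t,H}$ is a union of cosets of $H$, so $y\mapsto \pi(y)$ is injective. By the definition of type (iii), $\pi(y)$ is an irrational non-valuational initial segment of $M/H$.

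We would now like to invoke Lemma \ref{L: eliminating non-valuational}, but $M/H$ is a sort of $\CM^{\eq}$ rather than $M$ itself. So instead we rerun its proof directly in $M$, and this is the step I expect to be the main obstacle. Suppose the finite fibres are unbounded. As in Lemma \ref{L: eliminating infty equiv nondiscrete}, compactness gives an infinite fibre $Y_{t_0,H}$ that is discretely ordered by inclusion. By compactness choose $\alpha>H$ in $M$, i.e.\ with $\alpha+H$ positive in $M/H$, such that infinitely many successive pairs $y\subsetneq s(y)$ admit $x_1<x_2$ in $s(y)\setminus y$ with $x_2-x_1>\alpha$.

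Define
\[ D_y=\{x\in M: x\in y,\ x+\alpha\notin y\}. \]
Each $D_y$ is convex. It is nonempty because $\pi(y)$ is non-valuational, so $\alpha+H$ does not stabilize $\pi(y)$. The sets $D_y$ are pairwise disjoint for the infinitely many $y$ with a gap larger than $\alpha$ before $s(y)$, and there are infinitely many of them. Their union is then a definable subset of $M$ with infinitely many convex components, contradicting weak o-minimality.

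The delicate points are checking disjointness, i.e.\ that $D_y$ sits below $s(y)\setminus y$, and checking that the $H$-invariance of $y$ causes no trouble. Irrationality of $\pi(y)$ is not even needed here, only non-valuationality in the quotient.
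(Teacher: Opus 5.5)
Your argument is correct and is essentially the paper's: you reduce via $y\mapsto G(y)$ and the hypothesis on convex subgroups to fibres with a single stabilizer $H$, and then run the argument of Lemma~\ref{L: eliminating non-valuational} for the cuts $\pi(y)$ in $M/H$. The paper simply cites that lemma for the weakly o-minimal quotient $M/H$, while you unwind it in $M$ with $\alpha>H$, which amounts to the same thing. One correction: irrationality of $\pi(y)$ is exactly what justifies your choice of $\alpha$, because it guarantees that $\pi(s(y))\setminus\pi(y)$ is not a single point, so $s(y)\setminus y$ contains $x_1<x_2$ with $x_2-x_1\notin H$; your closing remark that irrationality is not needed is therefore misleading.
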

\begin{proof}
	Assume that we have a definable family $\{Y_t:t\in T\}$, with each $Y_t$ consisting of (codes of) initial segments, $C\sub M$, with stabilizers $H=G(C)$, such that $\pi(C)\sub M/H$ is irrational non-valuational.

	 For each $t\in T$, let $H(t)=\{G(C):C\in Y_t\}$. Then $H(t)$ is a set of convex subgroups of $(K,+)$ and  $|H(t)|\leq |Y_t|$. By the assumption, there is some $n\in \mathbb{N}$, such that $H(t)$ is finite if and only if $|H(t)|\leq n$. Thus, since we want to bound the size of the finite $Y_t$, we may assume that for every $t\in T$, $|H(t)|\leq n$. Since the groups in $H(t)$ are linearly ordered by inclusion, we may partition each $Y_t$ according to the associated group, and assume that for every $t\in T$, $|H(t)|=1$.

Assume now that the result fails.
    Then, there exists a strictly increasing sequence of natural numbers $n_k$, and for every $k$, some $t_k\in T$, such that $n_k=|Y_{t_k}|$. Thus,  there is some $t_0$ for which $Y_{t_0}$ is infinite and discretely ordered, and there is some fixed $H$ such that for all $C\in Y_{t_0}$, $G(C)=H$.
    
    Consider now the family of non-valuational initial segments inside $M/H$: $\{\pi(C):C\in Y_{t_0}(H)\}$. Since each such $C$ consists of cosets of $H$, the map $C\mapsto C/H$ is injective and order preserving on $Y_{t_0}(H)$, hence it contains a relatively convex subsequence of order $\omega$.

	This in turn gives rise to a definable family of  sets of irrational non-valuational cuts in the weakly o-minimal group (so dense) $M/H$, whose finite sets are unbounded in size, contradicting  Lemma \ref{L: eliminating non-valuational}. 
\end{proof}

\begin{lemma}\label{L:UF grp implies UF cosets}
    Assume that the collection of definable convex subgroups of $M$ has uniform finiteness. Then the collection of cosets of definable convex subgroups of $M$ also has uniform finiteness.
\end{lemma}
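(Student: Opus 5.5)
Fix a definable family $\{Y_t:t\in T\}$ such that each $Y_t$ consists of codes of cosets of definable convex subgroups of $M$. We want $n\in\Nn$ with $|Y_t|=\infty\iff |Y_t|>n$. The plan follows the pattern of Lemma \ref{L: eliminating type (iii)}: first bound the number of subgroups involved, then the number of cosets of a single subgroup.

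\textbf{Step 1: reduce to a single subgroup.} For $S\in Y_t$, the group $\{\delta:\delta+S=S\}$ is uniformly definable from the code of $S$. It is the convex subgroup $H_S$ of which $S$ is a coset. Let $H(t)=\{H_S:S\in Y_t\}$. This is a definable family of sets of imaginaries from the collection of definable convex subgroups, and $|H(t)|\le |Y_t|$. By hypothesis there is $n_0$ with $H(t)$ finite iff $|H(t)|\le n_0$. Since we only need to bound the finite $Y_t$, we may restrict to $t$ with $|H(t)|\le n_0$. The elements of $H(t)$ are linearly ordered by inclusion, so we can definably split each $Y_t$ into at most $n_0$ pieces $Y_t^i$, where $Y_t^i$ consists of the cosets of the $i$-th group in $H(t)$. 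It is then enough to handle each piece, so we may assume $|H(t)|=1$, say $H(t)=\{H_t\}$.

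\textbf{Step 2: bound the cosets of a fixed subgroup.} Now $Y_t$ is a set of cosets of $H_t$, uniformly in $t$. Suppose the finite $Y_t$ have unbounded size. The distinct cosets in $Y_t$ are pairwise disjoint convex sets and are linearly ordered. Consider the definable family $X_t=\bigcup Y_t\sub M$. Each $X_t$ has exactly $|Y_t|$ maximal convex components, provided adjacent cosets do not merge into one convex set.

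\textbf{The main obstacle} is exactly this merging. Two distinct cosets of $H_t$ can be adjacent when $M/H_t$ is discrete, for example when $H_t$ is maximal. In that case the union may be a single convex set, and weak o-minimality of $M$ gives nothing directly.

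To get around this, pass to the quotient. For each $t$, the sets $Y_t$ correspond injectively to finite subsets of $M/H_t$. I would instead use the family of initial segments: for $S\in Y_t$, let $C_S=\{x\in M: x<S\}$. The map $S\mapsto C_S$ is injective and uniformly definable. This turns $Y_t$ into a definable set $Y'_t$ of initial segments of $M$ with $|Y'_t|=|Y_t|$. Each $C_S$ is either valuational with stabilizer $H_t$ (when $H_t\neq 0$) or rational non-valuational (when $H_t=0$).

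By Lemma \ref{L: eliminating non-valuational}, the part of $Y'_t$ consisting of non-valuational initial segments is handled. For the valuational part, I would argue as in the proof of Lemma \ref{L: eliminating type (iii)}. By compactness, get $t_0$ with $Y_{t_0}$ infinite and discretely ordered, with the fixed subgroup $H=H_{t_0}$. Then $\{S/H:S\in Y_{t_0}\}$ is an infinite, definable, discretely ordered subset of $M/H$ consisting of single points.

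Then argue as in Lemma \ref{L: eliminating non-valuational}: each point $s$ in this set has a successor $s^+$. If infinitely many of the intervals $(s,s^+)$ are nonempty, choose a point $x_s$ strictly between $s$ and $s^+$. The cosets $\{x: s<x/H<s^+\}$ then give a definable subset of $M$ with infinitely many convex components, contradicting weak o-minimality.

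If instead almost all such intervals are empty, then $M/H$ contains an infinite definable discrete piece. This yields a definable convex equivalence relation with infinite discrete quotient. Its quotient map $E$, with $M/E$ infinite and discrete, is uniformly definable, and I would hope to see that such an $E$ is induced by the convex subgroups $\{x:|x|<k\cdot h\}$-type data. I expect this last point to be the real difficulty. What I would try first is showing that the stabilizers of the initial segments $\{x: x/H\le s\}$ within a discrete infinite block give infinitely many distinct definable convex subgroups in a uniformly definable family of finite size unbounded, contradicting the hypothesis.
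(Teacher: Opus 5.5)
Your Step 1 and the reduction to an infinite, definable, discretely ordered set $Z=\{S/H:S\in Y_{t_0}\}\subseteq M/H$ match the paper's proof. The argument is not finished, though. You leave open the case where almost all intervals $(s,s^+)$, $s\in Z$, are empty, call it the real difficulty, and offer only a hope for it. That hope would not work as stated: every initial segment $\{x: x/H\le s\}$ has stabilizer exactly $H$, so these segments produce one subgroup, not infinitely many.

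The missing idea is that this case never occurs, because $M/H$ is densely ordered. A weakly o-minimal ordered group is divisible (Macpherson--Marker--Steinhorn \cite{MacMaSt}), hence so is every quotient $M/H$, and a nontrivial divisible ordered group is dense. Density also follows directly from weak o-minimality. If $e$ were the least positive element of $M/H$, the preimage in $M$ of the definable set $2(M/H)$ would contain the cosets $2ke$ but not the cosets $(2k+1)e$, so it would have infinitely many convex components. In particular, your claim that cosets can be adjacent ``for example when $H_t$ is maximal'' is false: then $M/H_t$ is archimedean and divisible, hence dense. This density is exactly what the paper uses when it derives a contradiction from ``a definable, infinite, discrete subset of the weakly o-minimal $M/H$'' (compare ``weakly o-minimal group (so dense) $M/H$'' in Lemma \ref{L: eliminating type (iii)}). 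With density, your first subcase (nonempty intervals) is the only one, and your argument there already gives the contradiction.

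Density also removes the merging obstacle, so your very first idea closes the proof without the detour through initial segments and Lemma \ref{L: eliminating non-valuational}. After Step 1, any two consecutive cosets $a+H_t<b+H_t$ in a finite $Y_t$ have some coset $c+H_t$ strictly between them, and it does not belong to $Y_t$. Hence $X_t=\bigcup Y_t$ has exactly $|Y_t|$ convex components. Weak o-minimality of the theory bounds, by compactness, the number of convex components of the members of the definable family $\{X_t:t\in T\}$ uniformly, and this bounds $|Y_t|$ for every finite $Y_t$.
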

\begin{proof}
    Let $\{Y_t: t\in T\}$ be a definable family with each  element in $Y_t$ a coset of a definable convex subgroup. We can order cosets of convex subgroups by: $a+D<b+E$ if $D<E$ and if $D=E$ then $a+D<b+D$. Since we want to bound the size of those finite $Y_t$, we may assume that for all $t$, $Y_t$ is discretely ordered with respect to this order.
    
    The definable map, mapping $C\in Y_t$ to $C-C$ maps each $Y_t$ to the set $S_t$ of definable convex subgroups associated to the cosets in $Y_t$. By the assumption, there is some natural number $n$ such that if $|S_t|$ is  finite then $|S_t|<n$. Thus assume that $S_t$ is finite for all $t$.

    By saturation we find some $t_0$ for which $Y_{t_0}$ is infinite and as $|S_{t_0}|$ is finite there is some $H\in S_{t_0}$ with infinitely many cosets as elements of $Y_{t_0}$. This gives a definable, infinite, discrete subset of the weakly o-minimal $M/H$, contradiction.
\end{proof}

\begin{theorem}\label{T: convex subgroups is enough}
    Let $\CM=(M,+,<,\dots)$ be a sufficiently saturated weakly o-minimal expansion of an ordered group. If the collection of all definable convex subgroups of $M$ has uniform finiteness then $\CM^{\eq}$ has uniform finiteness.
\end{theorem}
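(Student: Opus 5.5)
By Lemma \ref{L: initial segments}, it suffices to show that the collection of all definable initial segments of $M$ has uniform finiteness. So fix a definable family $\{Y_t:t\in T\}$ of sets of (codes of) definable initial segments of $M$. We aim to find $n\in\Nn$ such that every finite $Y_t$ has size at most $n$.

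The plan is to partition each $Y_t$, uniformly and definably in $t$, according to the trichotomy of Lemma \ref{L:all cuts}:
\begin{enumerate}
    \item[(i)] $Y^1_t$: the irrational non-valuational segments;
    \item[(ii)] $Y^2_t$: the segments of the form $\{x: x\,\Box\, S\}$, with $S$ a coset of $G(C)$ and $\Box\in\{<,\le\}$;
    \item[(iii)] $Y^3_t$: the valuational segments of type (iii).
\end{enumerate}
These conditions are first-order in the code of $C$. Indeed, $G(C)$ is uniformly definable from $C$; "has a supremum" is definable; and whether $\pi(C)$ is irrational in $M/G(C)$ is expressed by the nonexistence of a coset $S$ with $C=\{x:x\,\Box\,S\}$. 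Hence each $\{Y^j_t\}_t$ is again a definable family. Since $|Y_t|=\sum_j |Y^j_t|$, it suffices to bound the finite members of each family separately.

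For $j=1$, Lemma \ref{L: eliminating non-valuational} applies directly. It also covers the rational non-valuational cuts, i.e.\ those in case (ii) where $S$ is a singleton. For $j=3$, we invoke Lemma \ref{L: eliminating type (iii)}, using the hypothesis that the definable convex subgroups have uniform finiteness.

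The remaining case is $j=2$. Here we further split $Y^2_t$ by the value of $\Box$, which is again definable, since $\Box$ is $\le$ iff $C$ has a maximal coset of $G(C)$. Each $C$ in such a piece determines its coset $S$ definably: $S$ is the least coset of $G(C)$ not contained in $C$, respectively the greatest coset contained in $C$. The map $C\mapsto S$ is injective on each piece, so we obtain a definable family $\{S_t\}$ of sets of cosets of definable convex subgroups, with $|S_t|=|Y^{2,\Box}_t|$. Singleton cosets correspond to the trivial subgroup $\{0\}$, which is a convex subgroup, so they are included. Lemma \ref{L:UF grp implies UF cosets} then bounds the finite members of this family.

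The main care is needed in the $j=2$ step: one must check that the coset $S$ is uniformly definable from $C$, and that this definability does not require $G(C)$ to be nontrivial. Once this is verified, summing the bounds obtained in the three cases gives the required $n$, and Lemma \ref{L: initial segments} yields uniform finiteness of $\CM^{\eq}$.
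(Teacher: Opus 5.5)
Your proposal is correct and follows the paper's proof essentially verbatim. It reduces to initial segments via Lemma \ref{L: initial segments}, partitions uniformly into types (i)--(iii), and handles these with Lemmas \ref{L: eliminating non-valuational}, \ref{L: eliminating type (iii)} and \ref{L:UF grp implies UF cosets}. The only cosmetic difference is that you split case (ii) by $\Box$ to make $C\mapsto S$ injective, whereas the paper simply notes that this map is at most two-to-one.
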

\begin{proof}
    By Lemma \ref{L: initial segments}, it suffices to show that the collection of initial segments of $M$ has uniform finiteness. 
    
	In Lemma \ref{L: definable sets in wom} we classified the different types of initial segments in $\CM$.  Given a definable family $\{Y_t: t\in T\}$  where each $Y_t$ is a set of (codes of) definable initial segments, we have to show that there is $n\in \Nn$ such that for all $t\in T$ such that $|Y_t|<\infty$ it follows that $|Y_t|< n$. For $t\in T$ partition $Y_t$ (definably, uniformly in $t$) into $Y_t^{(i)}$, $Y_t^{(ii)}$ and $Y_t^{(iii)}$ according to whether $y\in Y_t$ is a code for an initial segment of type (i), (ii) or (iii), respectively. It will suffice to show that there is $n$ such that for all $t$ if any one of $Y_t^{(i)}$, $Y_t^{(ii)}$ and $Y_t^{(iii)}$ is finite, it has size at most $n$. 
    
    For $\{Y_t^{(i)}: t\in T\}$ and $\{Y_t^{(iii)}: t\in T\}$  we use  Lemma \ref{L: eliminating non-valuational} and  Lemma \ref{L: eliminating type (iii)}, respectively. For $\{Y_t^{(ii)}: t\in T\}$,  to each initial segment coded by an element $y\in Y_t^{(ii)}$  corresponds a coset $C(y)$ of a definable convex subgroup of $M$. The map $y\mapsto C(y)$ is at most two-to-one, so the conclusion follows from Lemma \ref{L:UF grp implies UF cosets}. 
    \end{proof}

\begin{remark}\label{R: Thm ok for def wom}
    Although Theorem \ref{T: convex subgroups is enough} is stated for weakly o-minimal structures, the theorem also holds for weakly o-minimal definable sets in sufficiently saturated structures, i.e., definable ordered abelian groups $(D,+,<)$ for which every $\CM$-definable subset of $D$ is a finite union of convex definable sets. Namely, we do not need to assume that $D$ is stably embedded.

    More concretely: Let $D$ be a weakly o-minimal definable ordered abelian group in a sufficiently saturated structure $\CM$. If the collection of all definable convex subgroups of $D$ has uniform finiteness then every $\CM$-definable imaginary sort in $D$ has uniform finiteness.
\end{remark}

\subsection{Uniform finiteness in valued fields}

Let $\CK=(K,+,\cdot,<,\dots)$ be a sufficiently saturated weakly o-minimal expansion of an ordered field.\\

Below, by a discrete linear order we mean one where every non-extremal element has an immediate successor and an immediate predecessor. The following is then straightforward.
\begin{lemma}\label{L:union of discrete}
    Let $(X,<)$ be a definable set with a definable linear order in some sufficiently saturated structure $\CM$ and $\{X_t\subseteq X:t\in T\}$ be a definable family.  Let $(t_n)_{n<\omega}$ be such that $X_{t_n}$ are finite and $|X_{t_n}|\to \infty$.

    For any definable subset $Y\subseteq X$,  there exists $t_0\in T$ such that either $X_{t_0}\cap Y$ or $X_{t_0}\cap (X\setminus Y)$ are infinite and discretely ordered. 
\end{lemma}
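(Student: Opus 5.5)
The plan is a compactness argument over a suitable partial type, using the pigeonhole principle on each finite $X_{t_n}$. For each $n$, the set $X_{t_n}$ is finite and splits as $X_{t_n}=(X_{t_n}\cap Y)\cup(X_{t_n}\setminus Y)$. Hence at least one of the two pieces has size $\geq |X_{t_n}|/2$. Passing to a subsequence, we may assume that the same piece is large for all $n$, say $Z_{t}:=X_t\cap Y$ (the other case being symmetric, with $X\setminus Y$ in place of $Y$). Then $\{Z_t:t\in T\}$ is again a definable family, the $Z_{t_n}$ are finite, and $|Z_{t_n}|\to\infty$.

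Next I write down the partial type $\Sigma(t)$ in the variable $t$ consisting of:
\begin{enumerate}
\item $t\in T$;
\item for each $m\in\Nn$, the formula ``$|Z_t|\ge m$'';
\item the single first-order formula $\delta(t)$ expressing that $(Z_t,<)$ is discrete.
\end{enumerate}
Here $\delta(t)$ says that every element of $Z_t$ which is not the maximum of $Z_t$ has an immediate successor in $Z_t$, and every element which is not the minimum has an immediate predecessor in $Z_t$. Since $<$ and the family are definable, this is expressible by one formula.

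To check finite satisfiability, note that a finite subset of $\Sigma$ only demands $|Z_t|\ge m$ for some fixed $m$ together with $\delta(t)$. Any finite linear order is discrete, so $t_n$ with $|Z_{t_n}|\ge m$ satisfies it. By saturation (the family is defined over finitely many parameters, so $\aleph_0$-saturation suffices), some $t_0\in T$ realizes $\Sigma$. Then $Z_{t_0}=X_{t_0}\cap Y$ is infinite and discretely ordered, as required.

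The only point needing care is confirming that ``discretely ordered'' in the sense fixed just before the lemma is indeed first-order expressible uniformly in $t$, and that finite orders satisfy it. The latter holds because in a finite order every non-maximal element has an immediate successor and every non-minimal element has an immediate predecessor. Apart from this, the argument is routine, and I expect no real obstacle.
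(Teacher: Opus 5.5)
Your proof is correct. The paper gives no proof of this lemma (it calls it straightforward), and your argument is the natural one: pigeonhole to fix which piece grows along a subsequence, then use $\aleph_0$-saturation to realize the type $\{t\in T\}\cup\{|Z_t|\ge m : m\in\Nn\}\cup\{\delta(t)\}$. This is the same compactness-and-saturation reasoning the paper uses elsewhere, e.g.\ in Lemma \ref{L: eliminating infty equiv nondiscrete}.
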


\begin{theorem}\label{T:Main, UF iff conv val rings}
    Let $\CK$ be as above. Then $\CK^{\eq}$ has uniform finiteness if and only if the collection of all definable convex valuation subrings of $\CK$ has uniform finiteness.
\end{theorem}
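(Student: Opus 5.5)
The forward direction is immediate: a definable set of imaginaries from the collection of definable convex valuation subrings is a definable set in $\CK^{\eq}$, so uniform finiteness of $\CK^{\eq}$ gives it at once. For the converse we invoke Theorem \ref{T: convex subgroups is enough}. It then suffices to show that the collection of all definable convex subgroups of $(K,+)$ has uniform finiteness, assuming the collection of definable convex valuation subrings has it.

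To each nontrivial proper definable convex subgroup $H$ we attach its multiplicative stabilizer $\CO(H):=\{x\in K: xH\sub H\}$. This is a convex subring (by \cite[Lemma 6.2]{MacMaSt}, as recalled before Lemma \ref{L:all cuts}) and it is definable uniformly in a code for $H$. Since $H\neq K$, $\CO(H)$ is a proper convex subring and hence a valuation ring; it could be trivial, i.e.\ equal to $K$, only if $H$ were trivial or all of $K$. The trivial subgroup $\{0\}$ and $K$ itself are at most two elements and may be discarded.

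Now let $\{Y_t:t\in T\}$ be a definable family of sets of codes of convex subgroups, and suppose the finite $Y_t$ have unbounded size. The map $H\mapsto \CO(H)$ sends $Y_t$ onto a set $R_t$ of definable convex valuation rings, uniformly in $t$. By hypothesis there is $n$ such that a finite $R_t$ has size at most $n$. Since $|R_t|\le |Y_t|$, we may restrict to those $t$ with $|R_t|\le n$. The rings are linearly ordered by inclusion, so we can split definably according to the $i$-th ring, as in the proof of Lemma \ref{L: eliminating type (iii)}, and assume $|R_t|=1$.

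By saturation there is then $t_0$ with $Y_{t_0}$ infinite and discretely ordered by inclusion, with all $H\in Y_{t_0}$ sharing a common stabilizer $\CO$ (Lemma \ref{L:union of discrete} can be used to isolate such a $t_0$). The main step, and the expected obstacle, is deriving a contradiction from this. Fix $H_0\in Y_{t_0}$. For any $H\in Y_{t_0}$, $H$ is an $\CO$-submodule comparable with $H_0$, so we map $H$ to
\[
\{x\in K: xH_0\sub H\}.
\]
This is a fractional $\CO$-ideal, and it determines $H$ up to the ambiguity between the two $\CO$-modules $aH_0$ and $\bigcup_{b>a\CO}bH_0$-type modules with the same "radius".

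The plan is to show that $\{x\in K^{>0}: xH_0\sub H\}$ gives an injective, or at most finite-to-one, order-preserving map from $Y_{t_0}$ into the definable cuts of $K^{>0}$ that are invariant under multiplication by $\CO^\times$. Passing to the value group $\Gamma_{\CO}=K^\times/\CO^\times$, these become initial segments of the ordered group $\Gamma_{\CO}$ with trivial stabilizer. The reason for trivial stabilizer is that a nonzero stabilizer element $\gamma$ would give $x\notin \CO$ with $xH=H$, contradicting $\CO(H)=\CO$.

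We would thus obtain an infinite, discretely ordered, definable family of non-valuational cuts in the weakly o-minimal group $\Gamma_{\CO}$ (with the induced structure), contradicting Lemma \ref{L: eliminating non-valuational} via Remark \ref{R: Thm ok for def wom}. The delicate points are two. First, one must check that the maps are well defined and injective up to bounded multiplicity. Second, one must confirm that Lemma \ref{L: eliminating non-valuational} applies in the interpretable group $\Gamma_{\CO}$, whose weak o-minimality follows from that of $K$.
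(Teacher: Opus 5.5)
Your outer reductions are fine. The forward direction, the passage to convex subgroups via Theorem \ref{T: convex subgroups is enough}, the use of the hypothesis on the rings $\CO(H)$ to split each $Y_t$ so that all its members share one stabilizer $\CO$, and the final appeal to Lemma \ref{L: eliminating non-valuational} in $\Gamma_{\CO}$ all work. The gap is the central step, which you only announce as a plan, and the map you choose does not support the justification you give.

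The colon ideal $(H:H_0)=\{x: xH_0\subseteq H\}$ is genuinely not injective. Take $H_0$ to be the maximal ideal of $\CO$. Then the open and closed balls $\{x: v(x)>r\}$ and $\{x: v(x)\ge r\}$, both with stabilizer exactly $\CO$, have the same colon ideal $\{x: v(x)\ge r\}$. So bounded multiplicity needs a proof, which you do not give; it is at most $2$-to-$1$, but only after a cut-arithmetic argument. Your stabilizer argument also concerns the wrong object. It shows that a nonzero stabilizer element would give $x\notin\CO$ with $xH=H$. From $x(H:H_0)=(H:H_0)$, however, $xH=H$ does not follow formally. As written, the step that yields the contradiction is unproved.

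The missing observation makes $H_0$ unnecessary. Since $\CO(H)=\CO$, $H$ is itself an $\CO$-module. So for $h\in H\setminus\{0\}$ and $v(y)\ge v(h)$ you get $y=(y/h)h\in H$. Hence $H=\{0\}\cup v^{-1}(S_H)$, where $S_H:=v(H\setminus\{0\})$ is a final segment of $\Gamma_{\CO}$, and $H\mapsto S_H$ is an inclusion-preserving injection. Your stabilizer argument now applies verbatim: if $v(x)+S_H=S_H$ then $xH=H$, so $x^{\pm 1}\in\CO$ and $v(x)=0$. Thus $\{S_H: H\in Y_{t_0}\}$ is an infinite, discretely ordered definable set of non-valuational cuts in $\Gamma_{\CO}$. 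Every definable subset of $\Gamma_{\CO}$ is a finite union of convex sets, being an image under the monotone surjection $v|_{K^{>0}}$, so the intervals of this set contradict Lemma \ref{L: eliminating non-valuational}.

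With this fix your route is a genuine alternative to the paper's. The paper does not make the stabilizers constant. It takes $\CO$ to be the intersection of the chain of all $R(H)$, so the segments $v(H)$ may be valuational. It then needs $(\dagger)$, uniform finiteness of convex subgroups of $\Gamma_{\CO}$ obtained via coarsenings of $\CO$, plus a second application of Theorem \ref{T: convex subgroups is enough} and Remark \ref{R: Thm ok for def wom} to $\Gamma_{\CO}$. You spend the hypothesis earlier, on the stabilizers themselves. In return you need only the hypothesis-free Lemma \ref{L: eliminating non-valuational} in $\Gamma_{\CO}$.
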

\begin{proof}
Obviously if $\CK^{\eq}$ has uniform finiteness then the collection of all definable convex valuation subrings has uniform finiteness. We prove the reverse implication. We first show the following: \\

\noindent $(\dagger)$ Uniform finiteness of the collection of definable convex  valuation rings implies that for every definable valuation $v$ on $K$, the collection of all definable convex subgroups of $\Gamma_v$ has uniform finiteness.\\

Indeed, by Theorem \ref{T: convex subgroups is enough} (and Remark \ref{R: Thm ok for def wom}) it suffices to show that the collection of definable convex subgroups of $\Gamma$ has uniform finiteness, and since for every convex subgroup of $\Gamma$ there corresponds a unique convex valuation subring of $K$ (the corresponding coarsening of $\CO_v$), the result follows.
    
    To prove the theorem, we use Theorem \ref{T: convex subgroups is enough} again. Assume that $\{Y_t:t\in T\}$ is a definable family, with each $Y_t$ a definable set of (codes of) definable (convex) subgroups of $(K,+)$. The collection of all convex subgroups is linearly ordered by inclusion, and since we want to bound the size of those $Y_t$ which are finite, we may assume that for all $t$, $Y_t$ is discretely ordered by inclusion.  

    We assume towards a contradiction that there is no $n$ which bounds the size of all finite $Y_t$. By saturation there exists $t_0$, such that $Y_{t_0}$ is infinite and by our assumption discretely ordered by inclusion.

    For each  $H\in \bigcup_{t\in T} Y_t$, let $R(H)=\{a\in K: aH\sub H\}$ be the corresponding (convex) valuation ring. The family $\{R(H):H\in \bigcup_{t\in T}Y_{t}\}$ is a uniformly definable chain (with respect to inclusion) of valuation rings  and therefore its intersection $\CO$ is a definable convex valuation ring. Let $\Gamma$ be the corresponding value group and $v:K^\times\to \Gamma$ the corresponding valuation.
	
	For every convex group $H$, let $v(H)=\{v(a):a\in H\}$, a final segment of $\Gamma$. Each such $v(H)$ either has a minimal element in $\Gamma$ or not. By Lemma \ref{L:union of discrete}, after possibly replacing $Y_{t_0}$, either all the elements of $Y_{t_0}$ have a minimum or none of them have. We claim that in each of the two cases $H\mapsto v(H)$ is injective. Assuming this, we get that $\{v(H):H\in Y_{t_0}\}$ is a discrete family of subsets of $\Gamma$,  contradicting $(\dagger)$ (and Theorem \ref{T: convex subgroups is enough}, Remark \ref{R: Thm ok for def wom}). So we prove injectivity in each of these cases.

	Assume first that no $v(H)$, $H\in Y_{t_0}$ has a minimum. Note that in that case we have $H=v^{-1}(v(H))$, for assume that $v(a)\in v(H)$ for some $a\in K$, which we may assume to be strictly positive.     Then there is $b\in H$, $b>0$, with  $v(b)<v(a)$. It follows that $a<b$, so by convexity $a\in H$. Thus, the map $H\mapsto v(H)$ is injective.

    Now assume that for all $H\in Y_{t_0}$,  $v(H)$ has a minimum, denoted by  $\gamma_H$.
	Note that if $\gamma_H=r$ (and recall $H$ is convex) then $B_{>r}(0)\subsetneq H\sub B_{\geq r}(0)$  and therefore $R(H)\subseteq \CO$. By minimality of $\CO$, $R(H)=\CO$ and hence $H=B_{\geq r}(0)$. Hence $H\mapsto v(H)$ is injective and we are done.
\end{proof}

\begin{remark}  
As in Remark \ref{R: Thm ok for def wom}, the theorem also holds for weakly o-minimal definable fields (which are not necessarily stably embedded). Namely, let $K$ be weakly o-minimal definable field in a sufficiently saturated structure $\CM$. If the collection of all definable convex valuation subrings of $K$ has uniform finiteness then every $\CM$-definable imaginary sort in $K$ has uniform finiteness.
\end{remark}

In \cite{HaHaPeSs} we proved that if $\CK$ is power bounded $T$-convex then $\CK^{\eq}$ has uniform finiteness. The following corollary generalizes this result:

\begin{corollary}\label{C: UF for wom valued fields}
The following are equivalent for $\CK$ as before:
\begin{enumerate}
        \item $\CK^{\eq}$ has uniform finiteness.
        
        \item For any definable valuation $v$ on $K$ (possibly trivial), the collection of all definable convex subgroups of $\Gamma_v$ and $\bk_v$ has uniform finiteness. 

        \item There exists a definable valuation $v$ on $K$ (possibly trivial) such that the collection of all definable convex subgroups of $\Gamma_v$ and $\bk_v$ has uniform finiteness. 

        \item For any definable valuation $v$ on $K$ (possibly trivial), the collection of all definable convex subgroups of $\Gamma_v$ has uniform finiteness.

        \item For any definable valuation $v$ on $K$ (possibly trivial), the collection of all definable convex subgroups of $\bk_v$ has uniform finiteness.

\end{enumerate}
\end{corollary}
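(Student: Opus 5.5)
The plan is to prove the cycle of implications $(1)\Rightarrow(2)\Rightarrow(3)\Rightarrow(1)$, and then to attach (4) and (5) to it.

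\emph{Step 1: $(1)\Rightarrow(2)$.} Since $\Gamma_v$ and $\bk_v$ are interpretable sorts, any definable set of imaginaries from the convex subgroups of $\Gamma_v$ or $\bk_v$ is an interpretable set in $\CK^{\eq}$. It therefore has uniform finiteness by (1). The same argument gives $(1)\Rightarrow(4)$ and $(1)\Rightarrow(5)$.

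\emph{Step 2: $(2)\Rightarrow(3)$, $(2)\Rightarrow(4)$, $(2)\Rightarrow(5)$.} These are immediate.

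\emph{Step 3: $(4)\Rightarrow(1)$.} Apply (4) with $v$ the trivial valuation, so that $\Gamma_v$ is trivial. This gives nothing, so instead take any definable nontrivial $v$, if one exists. Theorem \ref{T:Main, UF iff conv val rings} reduces (1) to uniform finiteness of the collection of definable convex valuation rings. Every definable convex valuation ring is comparable with $\CO_v$.
\begin{itemize}
\item Those containing $\CO_v$ correspond bijectively and definably to convex subgroups of $\Gamma_v$.
\item Those contained in $\CO_v$ correspond to convex valuation rings of $\bk_v$ via $\CO\mapsto \CO/\m_v$.
\end{itemize}
I therefore expect (4) alone to need the finer reading ``for any $v$''. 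Given a definable family of sets of valuation rings, first split each set uniformly into the rings above $\CO_v$ and those below. Alternatively, as in the proof of Theorem \ref{T:Main, UF iff conv val rings}, take the intersection $\CO$ of the chain and use (4) for \emph{that} $v$. All rings in the family then contain $\CO$ and so correspond to convex subgroups of $\Gamma_{\CO}$. Hence $(4)\Rightarrow(1)$ follows by exactly the $(\dagger)$ argument combined with Theorem \ref{T:Main, UF iff conv val rings}.

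\emph{Step 4: $(5)\Rightarrow(1)$.} Dually, take the \emph{union} of the uniformly definable chain of valuation rings. It is again a definable convex valuation ring $\CO'$, and every ring in the family is contained in it. These rings correspond to convex valuation rings of $\bk_{\CO'}$, which, via $R\mapsto \m_R$ or the groups $R$, are convex subgroups of $(\bk_{\CO'},+)$. One checks that this map is definable and injective. Then (5) plus Theorem \ref{T:Main, UF iff conv val rings} yields (1).

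\emph{Step 5: $(3)\Rightarrow(1)$.} Fix $v$ as in (3) and a definable family $\{Y_t\}$ of sets of convex valuation rings. Split each $Y_t$ uniformly into $Y_t^{\supseteq}$, the rings containing $\CO_v$, and $Y_t^{\subsetneq}$, the rest. This can be done because convex valuation rings form a chain. On $Y_t^{\supseteq}$ use the injective map to convex subgroups of $\Gamma_v$. On $Y_t^{\subsetneq}$ use $R\mapsto R/\m_v\subseteq \bk_v$, which is an injective definable map to convex subgroups of $(\bk_v,+)$ since $\m_v\subseteq R\subseteq\CO_v$. Hypothesis (3) bounds both parts, and Theorem \ref{T:Main, UF iff conv val rings} concludes.

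\emph{Main obstacle.} The delicate point is verifying that these correspondences are \emph{definable} and \emph{injective} uniformly in families, so that they transport a bad family in $K$ to a bad family of imaginaries in $\Gamma_v$ or $\bk_v$. One must also remember that Remark \ref{R: Thm ok for def wom} is needed because $\Gamma_v$ and $\bk_v$ need not be stably embedded. I would check the residue-field direction most carefully, namely that $\m_v\subseteq R$ holds for every convex valuation ring $R\subseteq\CO_v$.
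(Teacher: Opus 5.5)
Your proposal is correct. For $(3)\Rightarrow(1)$ it follows the paper's argument: split the rings according to whether they contain $\CO_v$, send the first kind to convex subgroups of $\Gamma_v$ and the second to $R/\m_v\subseteq\bk_v$, then apply Theorem \ref{T:Main, UF iff conv val rings}. Your splitting also covers the trivial-$v$ case, which the paper treats separately.

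The other two implications go by different routes.

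For $(4)\Rightarrow(1)$, the paper reruns the proof of Theorem \ref{T:Main, UF iff conv val rings}, starting from convex subgroups of $(K,+)$. It substitutes $(4)$ for $(\dagger)$ and repeats the case analysis on whether $v(H)$ has a minimum. You instead use that theorem as a black box. Let $\CO$ be the intersection of all rings occurring in the whole family, not of a single $Y_t$. Then every ring is a coarsening of the single definable ring $\CO$, and $R\mapsto v_{\CO}(R^\times)$ is a definable injection into the convex subgroups of $\Gamma_{\CO}$. In effect you prove the converse of $(\dagger)$ directly, which is shorter than the paper's version. In a write-up, present only this argument. Your first attempt in Step 3, splitting around a fixed nontrivial $\CO_v$, would need information about $\bk_v$ that $(4)$ does not provide.

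For $(5)\Rightarrow(1)$, your union-of-the-chain argument is valid, since $\m_{\CO'}\subseteq R$ whenever $R\subseteq\CO'$. It is unnecessary, though. Take $v$ trivial, so $\bk_v=K$; then $(5)$ says exactly that the definable convex subgroups of $K$ have uniform finiteness, and Theorem \ref{T: convex subgroups is enough} finishes. The trivial valuation you set aside as useless for $(4)$ is what makes $(5)$ immediate.

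Finally, in your route you never apply Theorem \ref{T: convex subgroups is enough} inside $\Gamma_v$ or $\bk_v$. So you do not need Remark \ref{R: Thm ok for def wom} yourself; it enters only through the proof of Theorem \ref{T:Main, UF iff conv val rings}.
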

\begin{proof}
    $(1)$ easily implies the rest, (2) easily implies (3) by taking the trivial valuation and similarly (5) implies (1). We show the rest.

    $(3)\implies (1)$. Let $v$ be the given valuation, with valuation ring $\CO_v$. If $v$ is trivial then the collection of definable convex subgroups of $K$ has uniform finiteness so we are done by Theorem \ref{T: convex subgroups is enough}. So assume that $v$ is not trivial. 
    
    We use Theorem \ref{T:Main, UF iff conv val rings} and show that the collection of definable convex valuation subrings has uniform finiteness. Let $\{X_t: t\in T\}$ be a definable family of definable convex valuation subrings and let $(t_n)_n$ be such that $|X_{t_n}|\to\infty$. For any $t\in T$, and $R\in X_t$ either $R\subseteq \CO_v$ or $\CO_v\subseteq R$. Thus we may assume that in addition  either $R\subseteq \CO_v$ for every $R\in X_{t_n}$ and natural number $n$ or the other inclusion for all $R\in X_{t_n}$ and natural number $n$.

    If $R\subseteq \CO_v$ for every such $R$, then the induced valuation rings on $\bk_v$ form a family of definable convex subgroups of $\bk_v$ contradicting  $(3)$. If $\CO_v\subseteq R$ for every such $R$ then the induced definable convex subgroups in $\Gamma_v$ contradict $(3)$.

    $(4)\implies (1)$. This is a repeat of the proof of Theorem \ref{T:Main, UF iff conv val rings}, see $(\dagger)$.
\end{proof}

We still do not know the answer to the following:
\begin{question}    
Let $\CK$ be a T-convex valued field (not necessarily power-bounded). Does $\CK^{\eq}$ have uniform finiteness?
\end{question}

\appendix

\section{An example and a  resplendency theorem for weakly o-minimal structures}
In this section we give an example of a weakly o-minimal expansion of a valued field which does not have uniform finiteness in its imaginary sort. To build such an example, we start with a weakly o-minimal expansion $\Gamma$ of an ordered abelian group, admitting an infinite discrete imaginary sort (see, e.g., \cite[Example A.6]{HaHaPeSs}). We then consider the real closed valued field $\mathbb R((t^\Gamma))$ together with an expansion of RCVF by the extra structure on $\Gamma$, and show that it is weakly o-minimal. The key observation is a weakly o-minimal variant of known resplendence results for henselian valued fields. Explicitly, we show that  weakly o-minimal expansions of the value group and the residue field of a (pure) real closed valued field preserve weak o-minimality of the valued field sort.  


We first recall: Let $\CM$ be a $|T|^+$-saturated structure in a language $\CL$ and $T=\mathrm{Th}(M)$ its theory. Assume that $D$ is a stably embedded sort in $\CM$. Let $\mathcal{D}$ be $D$ with its $\emptyset$-induced structure and $\widetilde{\mathcal{D}}$ an arbitrary expansion of $\mathcal{D}$. It is folklore (see, e.g., \cite[Proposition 2.7]{CodEHaJiRi} for more details)  that letting $\widetilde L$, $\widetilde{\CM}$  be the corresponding expansions, then $D$ is stably embedded in $\widetilde{M}$ and the $\widetilde{L}$ induced structure on $D$ is $\widetilde{\mathcal{D}}$. Below we apply this without further mention to expansions of the RV-sort of (pure) real closed valued fields.

Let $\CK=(K,\cdot,+,0,1,\CO)$ be a sufficiently saturated model of RCVF. We denote by $\mathbf{RV}=(\mathrm{RV},\cdot,0,1,\oplus)$ the interpretable RV-sort, augmented by an     element $0$ for which we set $\rv(0)=0$ and the ternary relation \[
\oplus(x,y,z):=(\exists a,b)(\rv(a)=x\land  \rv(b)=y\land \rv(a+b)=z).\] 

\begin{fact}\label{F:oplus defined}
    For a fixed $x,y\in \mathrm{RV}$ the following are equivalent:
    \begin{enumerate}
        \item There is a unique $z\in \mathrm{RV}$ such that $\oplus(x,y,z)$.
        \item $x=0$ or $y=0$ or $x\neq \rv(-1)y$.
        \item $x=0$ or $y=0$ or $v(x+y)=\min\{v(x),v(y)\}$.
    \end{enumerate}
\end{fact}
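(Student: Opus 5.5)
The plan is to argue directly from the basic characterization of the RV-sort: for $a,a'\in K^\times$ we have $\rv(a)=\rv(a')$ if and only if $v(a-a')>v(a)$; in particular, $\rv(a)=\rv(a')$ implies $v(a)=v(a')$. We also use $\rv(-1)\rv(b)=\rv(-b)$, so condition (2) says $x\neq \rv(-b)$ for any (equivalently, every) $b$ with $\rv(b)=y$. We read (3) as the statement that $v(a+b)=\min\{v(a),v(b)\}$ for some (equivalently, by the computation below, every) choice of $a,b$ with $\rv(a)=x$ and $\rv(b)=y$. The cases $x=0$ or $y=0$ are trivial in all three conditions: if, say, $y=0$, then $b=0$ is forced and $z=x$ is the unique witness. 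So assume $x,y\neq 0$ throughout.

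\textbf{$(2)\Leftrightarrow(3)$.} Fix representatives $a,b$.
\begin{itemize}
\item If $v(a)\neq v(b)$, then $v(a+b)=\min\{v(a),v(b)\}$ automatically. Also $x\neq\rv(-b)$, since elements with equal $\rv$ have equal value.
\item If $v(a)=v(b)=\gamma$, then $v(a+b)>\gamma$ holds exactly when $v(a-(-b))>v(a)$, that is, exactly when $\rv(a)=\rv(-b)$, that is, when $x=\rv(-1)y$.
\end{itemize}
This gives the equivalence. Since the right-hand side does not depend on the representatives, neither does (3).

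\textbf{$(2)\Rightarrow(1)$.} Existence of $z$ is trivial; take $z=\rv(a+b)$. For uniqueness, take other representatives $a',b'$, so that $v(a-a')>v(a)$ and $v(b-b')>v(b)$. Using (3), $v(a+b)=\min\{v(a),v(b)\}$, and hence
\[
v\bigl((a+b)-(a'+b')\bigr)\ \ge\ \min\{v(a-a'),v(b-b')\}\ >\ \min\{v(a),v(b)\}\ =\ v(a+b).
\]
Therefore $\rv(a'+b')=\rv(a+b)$.

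\textbf{$(1)\Rightarrow(2)$.} We argue by contraposition. Suppose $x=\rv(-1)y\neq 0$, and choose $a$ with $\rv(a)=x$. Then $b=-a$ is a representative of $y$, which gives $z=\rv(0)=0$. On the other hand, for any $c$ with $v(c)>v(a)$, the element $b=-a+c$ is still a representative of $y$, since $v(b-(-a))=v(c)>v(a)=v(-a)$. This gives $z=\rv(c)\neq 0$. Because the valuation is nontrivial, such $c\neq 0$ exist, so uniqueness fails.

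The only point needing care is the bookkeeping of representatives, namely making sure that each condition is independent of the choice of $a,b$. The estimate in the step $(2)\Rightarrow(1)$ is the heart of the argument.
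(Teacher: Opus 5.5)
Your proof is correct. The paper gives no argument of its own here: it cites \cite[Proposition 2.4]{flenner} for $(1)\Leftrightarrow(3)$ and \cite[Proposition 2.2]{flenner} for $(2)\Leftrightarrow(3)$.

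Your argument has the same skeleton, with (3) as the pivot. You derive everything from the single criterion $\rv(a)=\rv(a')\iff v(a-a')>v(a)$, which the paper itself later invokes via \cite[Proposition 2.2]{flenner} in the proof of Lemma \ref{L:Rvwom implies K wom}. Doing the verification yourself makes explicit two things the citation leaves implicit.

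First, condition (3) literally writes $v(x+y)$ for $x,y\in\mathrm{RV}$. You read it through representatives and show it is independent of their choice.

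Second, you isolate exactly where nontriviality of the valuation enters: in $(1)\Rightarrow(2)$, through the existence of $c\neq 0$ with $v(c)>v(a)$. That hypothesis is genuinely needed. For the trivial valuation $\rv$ is the identity and $\oplus$ is ordinary addition, so (1) always holds while (2) and (3) fail when $x=-y\neq 0$. In the paper's setting the hypothesis is supplied by $\CK\models\mathrm{RCVF}$.

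The citation buys brevity and places the fact within Flenner's framework, which the appendix relies on anyway for field-quantifier elimination. Your version buys self-containedness and makes these two hidden hypotheses visible.
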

\begin{proof}
    $(1)\iff (3)$ is \cite[Proposition 2.4]{flenner} (and the couple of lines after the proof).

    $(2)\iff (3)$ \cite[Proposition 2.2]{flenner}.
\end{proof}

The above allows us to define a partial function: 
\begin{notation}
        For $x, y\in \mathrm{RV}$ we write $x\oplus y$ for the  unique element $z\in \mathrm{RV}$ satisfying $\oplus(x,y,z)$, if such a $z$ is unique. If such a $z$ is not unique $x\oplus y$ is undefined. 
\end{notation}

It is well known that if $(K,v)$ is a henselian valued field of equi-characteristic $0$  then in the structure $\CK=(K,\mathbf{RV},\rv)$, every formula is equivalent to one without field-quantifiers (i.e., quantifiers ranging over variables from the valued field sort), \cite[Proposition 4.3]{flenner}. As a direct consequence, $\mathbf{RV}$ is stably embedded, see for example \cite[Remark A.10]{Ri:analytic}. These two results hold resplendently, i.e. 
they remain valid under arbitrary expansions of the  $\mathbf{RV}$-sort, see the discussion before \cite[Proposition 4.3]{flenner}. As a consequence:

\begin{fact}\label{F:fieldQE-RV}\cite[Proposition 5.1]{flenner}
    Let $\widetilde{\CK}$ be $\CK$ together with some added structure on $\mathbf{RV}$.     Any definable set $X\subseteq K$ is of the form $\{x\in K: (\rv(x-a_1),\dots, \rv(x-a_n))\in S\}$ for some definable subset $S\subseteq \mathrm{RV}^n$ and $a_1,\dots,a_n\in K$.
\end{fact}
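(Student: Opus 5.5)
The plan is to derive this from the resplendent relative quantifier elimination quoted just before the statement, \cite[Proposition 4.3]{flenner}: in $(K,\mathbf{RV},\rv)$, and in any expansion on the $\mathbf{RV}$-sort, every formula is equivalent to one with no field-sort quantifiers. The work is then to bring a field-quantifier-free formula in one field variable $x$ into the required shape.

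\emph{Step 1: normal form.} Let $X=\phi(\widetilde{\CK};c)$ with $\phi$ in the expanded language. We may take all parameters from $K$, since parameters from $\mathrm{RV}$ can be written as $\rv$ of field elements (and $\rv(0)=0$). Eliminate field quantifiers from $\phi(x,c)$. Field-sort atomic formulas are polynomial equations $p(x)=0$ and, if the ordering is in the language, inequalities $p(x)>0$, with $p\in K[x]$. The only way field terms enter the $\mathbf{RV}$-sort is through $\rv$. Hence $\phi(x,c)$ is equivalent to
\[
\psi\bigl(\rv(p_1(x)),\dots,\rv(p_m(x))\bigr)\wedge\chi(x),
\]
where $\psi$ is an $\mathbf{RV}$-formula with parameters, the $p_j\in K[x]$, and $\chi$ is a boolean combination of sign conditions on polynomials. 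Each sign condition is absorbed into $\psi$: in RCVF the valuation ring is convex, so $\rv(y)$ determines the sign of $y$, and $p(x)>0$ becomes $\rv(p(x))\in\rv(K^{>0})$, a definable subset of $\mathrm{RV}$.

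\emph{Step 2: factoring.} Since $K$ is real closed, each $p_j$ factors as
\[
p_j=\lambda\prod_i(x-a_i)\prod_k\bigl((x-b_k)^2+d_k^2\bigr),\qquad d_k\neq 0.
\]
The map $\rv$ is multiplicative, so $\rv(p_j(x))$ is the product of $\rv(\lambda)$, the $\rv(x-a_i)$, and the $\rv\bigl((x-b_k)^2+d_k^2\bigr)$. For the quadratic factors, both summands $(x-b_k)^2$ and $d_k^2$ are $\ge0$ and $d_k^2>0$, so their sum does not cancel: its valuation is the minimum of the two valuations. By Fact~\ref{F:oplus defined}, the partial operation $\oplus$ is therefore defined here, giving
\[
\rv\bigl((x-b_k)^2+d_k^2\bigr)=\rv(x-b_k)^2\oplus\rv(d_k)^2.
\]
This is a definable function of $\rv(x-b_k)$ in $\mathbf{RV}$ (the case $x=b_k$ included, using $0\oplus y=y$).

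\emph{Step 3: assembly.} Let $a_1,\dots,a_n$ list all the $a_i$ and $b_k$ that occur. Each $\rv(p_j(x))$ is then an $\mathbf{RV}$-definable function of $(\rv(x-a_1),\dots,\rv(x-a_n))$. Substituting into $\psi$ yields a definable $S\subseteq\mathrm{RV}^n$ with
\[
X=\{x\in K:(\rv(x-a_1),\dots,\rv(x-a_n))\in S\}.
\]
In particular, field equations $p(x)=0$ reduce to conditions $\rv(x-a_i)=0$.

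The main obstacle is Step 1, namely checking that after eliminating field quantifiers the field variable really occurs only inside $\rv$ of polynomials (or in polynomial sign and equality conditions), in the exact language of \cite{flenner}. That is where resplendence is essential: the added structure on $\mathbf{RV}$ must not create new ways for field terms to appear. Steps 2 and 3 are routine given Fact~\ref{F:oplus defined} and the real closedness of $K$.
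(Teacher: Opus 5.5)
Your proof is correct, but it takes a different route from the paper. The paper does not prove this statement. It quotes it from \cite[Proposition 5.1]{flenner} as a consequence of the resplendent relative quantifier elimination for henselian valued fields of equicharacteristic $0$, so it inherits a one-variable analysis that works without real closedness.

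You derive the real closed case, which is the only case the paper uses, directly from that elimination. Real closedness splits every polynomial into linear factors and factors $(x-b)^2+d^2$ with $d\neq 0$. Convexity of $\CO$ prevents cancellation in a sum of two non-negative terms. Hence $\rv\bigl((x-b)^2+d^2\bigr)=\rv(x-b)^2\oplus\rv(d)^2$ is a well-defined $\mathbf{RV}$-definable function of $\rv(x-b)$.

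What your route buys is a short, self-contained argument. It also produces $S$ explicitly definable in the $\mathbf{RV}$-sort with parameters, which is exactly what Lemma \ref{L:Rvwom implies K wom} consumes. What it gives up is generality: over a general henselian field, irreducible factors of higher degree admit no such positivity trick and need a finer analysis.

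Two minor points:
\begin{itemize}
\item The step you flag as the main obstacle is purely syntactic. There is no function symbol from $\mathbf{RV}$ to $K$, so after eliminating field quantifiers $x$ occurs only in polynomial terms, either in field-sort atomic formulas or under $\rv$.
\item If $\CO$ belongs to the field-sort language, you should also absorb atomic formulas $p(x)\in\CO$, exactly as you absorb sign conditions, via $y\in\CO\iff\rv(y)\in\rv(\CO)$.
\end{itemize}
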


Since $\CK$ is real closed, the order on $K$ is definable, and therefore so is the induced order on $\mathrm{RV}$.  Therefore, there is no harm expanding both structures by these orders. From now we let $\CK=(K,\cdot,+,0,1,<,\CO)$ and $\mathbf{RV}=(\mathrm{RV},\cdot,0,1,<,\oplus)$.


  \begin{lemma}\label{L:Rvwom implies K wom}
      Let $\widetilde K$ be $K$ together with a weakly o-minimal enrichment of $\mathbf{RV}$ which we denote by $\widetilde{\mathbf{RV}}$. Then $\widetilde K$ is still weakly o-minimal.
  \end{lemma}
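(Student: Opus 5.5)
By Fact \ref{F:fieldQE-RV}, every definable $X\subseteq K$ in $\widetilde K$ has the form $\{x\in K: (\rv(x-a_1),\dots,\rv(x-a_n))\in S\}$ with $S\subseteq \mathrm{RV}^n$ definable in $\widetilde{\mathbf{RV}}$ and $a_1,\dots,a_n\in K$. We reorder so that $a_1<\dots<a_n$. These points cut $K$ into finitely many open intervals and points, and the points contribute only finitely many convex pieces. It therefore suffices to show that $X\cap I$ is a finite union of convex sets for each open interval $I$ between consecutive $a_i$ (or unbounded on one side).

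Fix such an interval $I$. The plan is to refine $I$ further, using finitely many definable convex pieces, so that on each piece the tuple $(\rv(x-a_1),\dots,\rv(x-a_n))$ is a definable order-preserving or order-reversing function of a single coordinate $\rv(x-a_j)$.

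For $x\in I$, the comparisons $v(x-a_i)$ versus $v(a_k-a_i)$ are determined by finitely many balls around the $a_i$. These balls are convex, so their boolean combinations split $I$ into finitely many convex pieces. On each piece, the pattern of which $x-a_i$ is ``closest'' is fixed.

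On a piece $P$, we choose $j$ such that $v(x-a_j)\ge v(x-a_i)$ for all $i$ throughout $P$; such a $j$ exists after further refinement, because the ultrametric makes the nearest $a_j$ determined by ball membership. For each $i$ we write $x-a_i=(x-a_j)+(a_j-a_i)$.

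\begin{enumerate}
\item If $v(x-a_j)>v(a_j-a_i)$, then $\rv(x-a_i)=\rv(a_j-a_i)$, a constant.
\item If $v(x-a_j)<v(a_j-a_i)$, then $\rv(x-a_i)=\rv(x-a_j)$.
\item In the equal-valuation case, Fact \ref{F:oplus defined} applies. Outside the single $\rv$-class where $\rv(x-a_j)=\rv(-1)\rv(a_j-a_i)$, we have $\rv(x-a_i)=\rv(x-a_j)\oplus\rv(a_j-a_i)$. That exceptional class is itself a ball, hence convex, so it is removed by a further refinement. Inside it, we switch to using $a_i$ as the center, which gives a recursion on the finite configuration.
\end{enumerate}

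Thus on each piece $P$ there is a $\widetilde{\mathbf{RV}}$-definable function $f$ with $X\cap P=\{x\in P: f(\rv(x-a_j))\in S\}$, that is, $X\cap P=\{x\in P:\rv(x-a_j)\in S'\}$ for a definable $S'\subseteq\mathrm{RV}$.

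To finish, note that $P$ lies on one side of $a_j$, so $x\mapsto \rv(x-a_j)$ is monotone (weakly increasing) on $P$, because $\rv$ respects the order on elements of a fixed sign. By weak o-minimality of $\widetilde{\mathbf{RV}}$, $S'$ is a finite union of convex sets. The preimage of a convex subset of $\mathrm{RV}$ under a monotone map is convex, so $X\cap P$ is a finite union of convex sets.

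The main obstacle is step (3), the bookkeeping of the equal-valuation case in the reduction to a single center. There $\oplus$ is only partial and cancellation occurs, and we must check that the refinement terminates. I would handle this by induction on $n$. In the exceptional class, $x-a_i$ has strictly larger valuation than $x-a_j$, so $a_i$ is a strictly better center. Since there are only finitely many centers, the process stops.
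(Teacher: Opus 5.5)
Your proof is correct and follows essentially the same route as the paper. The paper covers $K$ by the sets $U_i=\{x: \bigwedge_j v(x-a_i)\ge v(x-a_j)\}$, each a finite union of convex sets. On $U_i$ it rewrites every $\rv(x-a_j)$ as $\rv(x-a_i)\oplus\rv(a_i-a_j)$, which reduces the problem to a definable $D_i\subseteq\mathrm{RV}$, and then it uses weak o-minimality of $\widetilde{\mathbf{RV}}$ together with monotonicity of $\rv$.

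One simplification is available to you. The exceptional class in your step (3) is actually empty on $P$: it is exactly where $v(x-a_i)>v(x-a_j)$, which contradicts your choice of $j$ (this is the paper's left-to-right inclusion argument). So no recursion is needed, and $\oplus$ also handles your cases (1) and (2) uniformly.
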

\begin{proof}
    Let $X\subseteq K$ be a definable set in $\widetilde K$, by Fact \ref{F:fieldQE-RV} it is of the form \[\{x\in K: (\rv(x-a_1),\dots, \rv(x-a_n))\in S\},\] for some definable subset $S\subseteq \mathrm{RV}^n$.

    For any $1\leq i\leq n$ let $U_{i}=\{x\in K: \bigwedge_j v(x-a_{i})\geq v(x-a_j)\}$. This is a finite union of convex sets and $K=\bigcup_i U_i$.

    Let $D_{i}=\{y\in \mathrm{RV}: (y\oplus \rv(a_{i}-a_1),\dots, y\oplus \rv(a_{i}-a_n))\in S\}$. 
    Let us verify that  
    \[X=\bigcup_{i} U_{i}\cap \{x\in K: \rv(x-a_{i})\in D_{i}\}.\]

    Right to left inclusion. Let $x\in U_i$ with $\rv(x-a_i)\in D_i$. In particular, $\rv(x-a_i)\oplus \rv(a_i-a_j)$ is defined for all $j$ and thus is necessarily equal to $\rv(x-a_j)$. This gives that $(\rv(x-a_1),\dots, \rv(x-a_n))\in S$.
    
    Left to right inclusion. Let $x\in X$, then $x\in U_{i}$ for some $i$. To show that $\rv(x-a_{i})\in D_{i}$ it is enough to show that for all $j$, $\rv(x-a_{i})\oplus \rv(a_i-a_j)$ is defined, for then as above it is equal to $\rv(x-a_j)$ and the result follows since $x\in X.$ 
    
    We show that $\rv(x-a_{i})\oplus \rv(a_i-a_j)$ is defined using Fact \ref{F:oplus defined}. Assume that $\rv(x-a_{i})\oplus \rv(a_i-a_j)$ is undefined for some $j$: necessarily $x- a_{i}\neq 0$, $a_j-a_i\neq 0$ and $\rv(x-a_{i})=\rv(a_{j}-a_i)$. By \cite[Proposition 2.2]{flenner}, $v(a_j-a_i-(x-a_i))>v(x-a_i)$, i.e. $v(x-a_j)>v(x-a_{i})$ contradicting $x\in U_{i}$. 
    
    By weak o-minimality of $\widetilde{\mathbf{RV}}$, $D_{i}$ is a finite union of convex sets, thus $\{x\in K:\rv(x-a_{i})\in D_{i}\}$ is also a finite union of convex sets. We conclude that $X$ is a finite union of convex sets.
\end{proof}

    Transferring weak o-minimality from the $\mathrm{RV}$ sort to the valued field is the first step, the second is to transfer from $\Gamma$ and $\bk$ to $\mathrm{RV}$. For that we will need to deal with short exact sequences.
    The following is proved in \cite{AsChGeZi} but taken in this form from \cite[Fact 1.74]{Touchard}. Note that as $A$ is divisible, it is pure in $B$.

    \begin{fact}\label{F:relQE in short exact}
        Let $\CM=(0\to A\xrightarrow[]{\iota} B\xrightarrow{v}C\to 0)$ be a short exact sequence of abelian groups, with $A$ divisible, allowing expansion of the group structure on $A$ and $C$. 

        Then $A$ and $C$ are stably embedded in $\CM$ and every definable subset of $B$ is a boolean combination of definable sets of the following forms:
        \begin{enumerate}
            \item $\{x\in B: (v(t_1(x)),\dots v(t_n(x))\in V\}$, where the $t_i$ are terms in the group language and $V$ is a definable subset of $C^n$.
            \item $\{x\in B: (\rho(t_1(x)),\dots \rho(t_n(x))\in U\}$, where the $t_i$ are terms in the group language, $U$ is a definable subset of $A^n$ and $\rho(a)=\iota^{-1}(a)$ if $a\in \iota(A)$ and $0$, otherwise.
        \end{enumerate}
    \end{fact}

    Using this result we prove a general weak o-minimality transfer result.

    \begin{proposition}\label{P:wom shortexact}
    Let $\CM=(0\to A\xrightarrow[]{\iota} B\xrightarrow{v}C\to 0)$ be a short exact sequence of abelian groups, with $A$ divisible. 

        Let $\widetilde{\CM}=((B,+,0),\widetilde{A}=(A,+,0<,\dots),\widetilde{C}=(C,+,0,<,\dots),\iota,v)$ be $\CM$ together with weakly o-minimal expansions of $A$ and $C$. Then, 

        \begin{enumerate}
            \item There is a unique (definable) order $<$ on $B$ such that with respect to this order, $\iota$ and $v$ are order preserving homomorphisms, $\iota(A)$ is a convex subgroup and $(B,+,<)$ is an ordered abelian group.
            \item The sort $(B,+,0,<)$ (together with its induced structure) is weakly o-minimal.
        \end{enumerate}
    \end{proposition}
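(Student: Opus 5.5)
For (1), I will take the lexicographic order: for $b\in B$, set $b>0$ iff either $v(b)>0$ in $C$, or $v(b)=0$ and $b=\iota(a)$ with $a>0$ in $A$. This is definable in $\widetilde{\CM}$ from $v$, $\iota$ and the orders on $A$ and $C$. Checking that the positive cone $P$ is closed under addition and that $B=P\sqcup -P\sqcup\{0\}$ is routine, using that $v$ is a homomorphism and that $\ker v=\iota(A)$. Order preservation of $\iota$ and $v$ is immediate. Convexity of $\iota(A)$ also follows: if $0<b<\iota(a)$ then $v(b)\le 0$ and $v(b)\ge 0$, so $v(b)=0$ and $b\in\iota(A)$.

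For uniqueness, suppose $<'$ is another order with the listed properties and $b>0$. If $v(b)>0$, then $b>'0$, since otherwise $b\le' 0$ would give $v(b)\le 0$. If $v(b)=0$, then $b=\iota(a)$, and $\iota$ being order preserving forces $b>'0$ iff $a>0$. So $<'$ agrees with $<$.

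For (2), I use Fact \ref{F:relQE in short exact}. Since a boolean combination of finite unions of convex sets is again a finite union of convex sets, it suffices to show that each basic set of type (1) or (2) in one variable $x\in B$ is a finite union of convex sets. Unary terms have the form $t(x)=nx+b$ with $n\in\mathbb Z$ and $b\in B$ a parameter.

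For type (1), I first split $B$ into the finitely many convex pieces cut out by the sign conditions on the terms, namely $x<-b/n$, $x>-b/n$, and the single point where $nx+b=0$. This works when $b/n$ exists; if it does not, I replace this with the set $nx+b>0$, which is still convex because $x\mapsto nx+b$ is monotone. On such a piece I consider the map $x\mapsto (v(n_1x+b_1),\dots,v(n_kx+b_k))$. The claim is that the preimage of $V$ is a finite union of convex sets. The idea is that $v$ is monotone and its fibers are cosets of the convex subgroup $\iota(A)$, so the set is a union of $\iota(A)$-cosets, and its image in $B/\iota(A)\cong C$ is $\{c\in C: (n_1c+v(b_1),\dots)\in V\}$, which is definable in $\widetilde C$. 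That image is a finite union of convex sets by weak o-minimality of $\widetilde C$, and the preimage under a monotone map of a convex set is convex.

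For type (2), $\rho(nx+b)\neq 0$ only when $nx+b\in\iota(A)$, that is, when $x$ lies in a single coset of $\iota(A)$ (if $n\neq 0$, using divisibility in $A$ to handle solvability). The set therefore splits into the part where all relevant $\rho$ vanish and, on each of finitely many cosets $x_0+\iota(A)$, a set of the form $\{x_0+\iota(a):(\rho(\dots))\in U\}$. Intersecting the cosets for the different $i$, each piece is the $\iota$-translate of a set definable in $\widetilde A$ by affine substitutions, hence a finite union of convex sets. The complement part is a union of cosets minus finitely many cosets, which is handled as above.

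I expect the main obstacle to be the coset bookkeeping: $nx+b\in\iota(A)$ can have solutions in several cosets when $B$ is not divisible, namely cosets differing by $n$-torsion of $B/\iota(A)\cong C$. $C$ is torsion-free, so there is at most one such coset, and if none exists the condition is void. I would also check that $\rho$ on terms with $n=0$ contributes only constants.
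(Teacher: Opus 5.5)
Your proposal is correct and follows essentially the same route as the paper. For (1) you use the same lexicographic positive cone. For (2) you use the relative quantifier elimination of Fact \ref{F:relQE in short exact}: first-kind sets become $v^{-1}$ of a $\widetilde C$-definable set, and second-kind sets are split into the finitely many cosets of $\iota(A)$ (at most one per term, by torsion-freeness of $C$) on which some $n_ix+b_i$ lands in $\iota(A)$, where they become translates of $\iota$-images of $\widetilde A$-definable sets, plus a complement on which the $\rho$-tuple is constant; the paper organizes this same split via the sets $P_J$. Two small remarks: the sign-splitting in your type (1) step is unnecessary, since $v$ is a homomorphism and so $v(nx+b)=nv(x)+v(b)$ directly; and divisibility of $A$ plays no role in counting the cosets, as you yourself note at the end.
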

    \begin{proof}
        (1) The order on $B$ defined by $b>0$ if and only if $v(b)>0$ or $v(b)=0$ and $\iota^{-1}(b)>0$, is the unique order on $B$ rendering it an ordered abelian group such that $\iota$ and $v$ are order preserving. 
        
        (2) Applying Fact \ref{F:relQE in short exact}, every definable subset of $B$ is a boolean combination of definable sets of the form 
        \begin{enumerate}
            \item $\{x\in B: (v(a_1+m_1x),\dots, v(a_k+m_kx))\in V\}$ for $a_i\in B$, $m_i$ an integer and $V$ a definable set in $\widetilde{C}$ and
            \item $\{x\in B:(\rho(a_1+m_1x),\dots,\rho(a_k+m_kx))\in U\}$ for $a_i\in B$, $m_i$ an integer and $U$ a definable set $\widetilde{A}$, where $\rho(a)=\iota^{-1}(a)$ if $a\in \iota(A)$ and $0\in A$ otherwise. 
        \end{enumerate}
        
        It suffices to show that each kind of these definable sets is a finite union of convex sets.
        
        Let $X\subseteq B$ be a definable set of the first kind. Written differently, it is the same as $v(x)\in \{y: (v(a_1)+m_1y,\dots, v(a_k)+m_ky)\in V\}$. The latter is a definable subset of $\widetilde{C}$ so a finite union of convex sets by weak o-minimality. As $v$ is order preserving, $X$ is a finite union of convex subsets of $B$.

        Let $X\subseteq B$ be a definable set of the second kind. For each $J\subseteq \{1,\dots,k\}$ let $P_J=\{x\in B: a_i+m_ix\in \iota(A)\iff i\in J\}$. It is a finite covering of $B$ so it is enough to show that $X\cap P_J$ is a finite union of convex sets.

        \begin{claim}
            Each $P_J$ is a finite union of  convex sets.
        \end{claim}
        \begin{claimproof}
            Note that $a_i+m_ix\in \iota(A)\iff v(a_i)+m_iv(x)=0$. Setting $Y=\{y\in C:\bigwedge_{i\in J} m_iy=-v(a_i)\wedge \bigwedge_{i\notin J}m_iy\neq-v(a_i)\}$. This is a finite union of convex sets and since $P_J=\{x\in B:v(x)\in Y\}$ it is a finite union of convex sets as well.      
        \end{claimproof}
        
        

        If $m_i=0$ for every $i\in J$ then $X\cap P_J=\{x\in P_J: (\rho(a_1'),\dots,\rho(a_k'))\in U\}$, where $a_i'=a_i$ if $i\in J$ and $a'_i=0$ otherwise. So either empty or all of $P_J$, and thus either way a finite union of convex sets.

        As in the proof of the claim, if there exists $i\in J$ with $m_i\neq 0$ then either $P_J=\emptyset$ and we are done or $P_J\subseteq v^{-1}(\gamma)$ non-empty for some $\gamma\in C$. Fix an element $b\in v^{-1}(\gamma)$, and let $c_i\in A$ be such that $\iota(c_i):=a_i+m_ib$.
        \begin{claim}
            $X\cap P_J=b+\iota(Y_J)$, for $Y_J=\{y\in A:(d_1(y),\dots,d_k(y))\in U\}$ where $d_i(y)=c_i+m_iy$ if $i\in J$ and $0$ otherwise.
        \end{claim}
        \begin{claimproof}
            Let $x\in X\cap P_J$ so $x=b+\iota(t)$ for some $t\in A$. We will show that $t\in Y_J$. Indeed, for every $i\in J$, $d_i(t)=c_i+m_it=\iota^{-1}(a_i+m_ib+m_i\iota(t))=\iota^{-1}(a_i+m_ix)=\rho(a_i+m_ix)$. The other direction is similar.
        \end{claimproof}

        As $Y_J$ is definable in $A$, it is a finite union of convex sets and thus so is $X\cap P_J$, as required.
    \end{proof}

        We wish to apply this result to a short exact sequence coming from $\mathrm{RV}, \bk$ and $\Gamma$.

    Let $\mathbf{RV}_{\bk,\Gamma}=((\mathrm{RV},\cdot, 0,1),\bk=(\bk,\cdot,+,0,1),\mathbf{\Gamma}=(\Gamma,+,0,<,\infty),\iota,v)$ be the three sorted structure associated  together with a $0$ element to $\bk$ and $\mathrm{RV}$ and an $\infty$ element to $\Gamma$. 
     Whenever $(K,v)$ is a henselian equi-characteristic $0$ valued field, both $\Gamma$ and $\bk$ are stably embedded inside $\mathbf{RV}_{\bk,\Gamma}$, as a pure ordered abelian group and as a pure field, respectively. As noted in \cite[\S 3]{ChSi:inp} the ternary relation $\oplus$ on $\mathrm{RV}$ is also definable in $\mathbf{RV}_{\bk, \Gamma}$.

    By Proposition \ref{P:wom shortexact}(1) the induced order on $\mathrm{RV}$ is definable.

    \begin{lemma}\label{L:wom shortexact of RV}
        Let $\widetilde{\mathbf{RV}_{\bk,\Gamma}}=((\mathrm{RV},\cdot, 0,1,<),\widetilde{\bk}=(\bk,\cdot,+,0,1,<,\dots),\widetilde{\mathbf{\Gamma}}=(\Gamma,+,0,<,\infty,\dots),\iota,v)$ be $\mathbf{RV}_{\bk,\Gamma}$ together with an enrichment of $\bk$ and $\Gamma$ still rendering them weakly o-minimal. Then the $\mathrm{RV}$ sort of $\widetilde{\mathbf{RV}_{\bk,\Gamma}}$ is  weakly o-minimal.
    \end{lemma}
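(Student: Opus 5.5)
We will apply Proposition \ref{P:wom shortexact} to the multiplicative short exact sequence
\[0\to \bk^\times\xrightarrow{\iota}\mathrm{RV}^\times\xrightarrow{v}\Gamma\to 0.\]
The plan is to reduce weak o-minimality of the $\mathrm{RV}$ sort to weak o-minimality of the two ends of this sequence, treating the adjoined elements $0$ and $\infty$ as finitely many extra points. Three issues must be handled before the proposition can be used: divisibility of the kernel, the fact that $\bk^\times$ with multiplication is not literally ordered, and the passage from the pure sequence to $\widetilde{\mathbf{RV}_{\bk,\Gamma}}$.

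\emph{Divisibility and order.} The group $\bk^\times$ is not divisible, since $-1$ is not a square; but $\bk$ is real closed because $\CK\models$ RCVF. So we split off the sign: $\bk^{>0}$ is divisible, and $\mathrm{RV}^{>0}:=\{x: x>0\}$ is a subgroup of index $2$ in $\mathrm{RV}^\times$. This gives the exact sequence
\[0\to \bk^{>0}\to \mathrm{RV}^{>0}\to \Gamma\to 0.\]
The group $\bk^{>0}$ is ordered by the order of $\bk$, and it is weakly o-minimal in $\widetilde{\bk}$, since its definable subsets are definable subsets of $\bk$. We then check that the order on $\mathrm{RV}^{>0}$ induced from $K$ coincides with the order given by Proposition \ref{P:wom shortexact}(1). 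This amounts to the standard computation that for positive elements, $\rv(a)<\rv(b)$ holds iff $v(a)>v(b)$, or $v(a)=v(b)$ and the residue of $a/b$ is less than $1$. Because the valuation reverses order, we compose $v$ with $\gamma\mapsto-\gamma$, which is harmless since $\widetilde{\mathbf\Gamma}$ is an ordered group.

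\emph{Comparing definable sets.} Fact \ref{F:relQE in short exact} and Proposition \ref{P:wom shortexact} describe sets definable in the short exact sequence with expanded ends $\widetilde{\bk}^{>0}$ and $\widetilde{\Gamma}$. We must show that every subset of $\mathrm{RV}^{>0}$ definable in $\widetilde{\mathbf{RV}_{\bk,\Gamma}}$ is already definable there. I expect this to be the main obstacle. The ingredients are the stable embeddedness of $\bk$ and $\Gamma$ in $\mathbf{RV}_{\bk,\Gamma}$, recalled above, together with the folklore expansion result (\cite[Proposition 2.7]{CodEHaJiRi}), which lets us regard $\widetilde{\mathbf{RV}_{\bk,\Gamma}}$ as the expansion of the short exact sequence structure by extra structure on the stably embedded ends. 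What remains to verify is that the field structure on $\bk$, namely addition, is part of the expansion of $A=\bk^{>0}$. This holds because $\bk$ is definable from $\bk^{>0}$ together with its field operations, and $\widetilde{\bk}$ is an expansion of the ordered group $\bk^{>0}$ by induced structure. For the multiplicative sequence of a henselian field, the relative quantifier elimination of Fact \ref{F:relQE in short exact} should apply resplendently. I would cite \cite{AsChGeZi} and \cite{Touchard}, possibly with the constant $\rv(-1)$ added, to get that every definable subset of $\mathrm{RV}^{>0}$ in $\widetilde{\mathbf{RV}_{\bk,\Gamma}}$ is definable in the expanded short exact sequence. Then Proposition \ref{P:wom shortexact}(2) shows it is a finite union of convex sets.

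\emph{Assembling.} A definable $X\subseteq \mathrm{RV}$ splits as
\[X=(X\cap\mathrm{RV}^{>0})\cup(X\cap\{0\})\cup(X\cap \mathrm{RV}^{<0}).\]
The negative part is carried to $\mathrm{RV}^{>0}$ by the definable order-reversing map $x\mapsto \rv(-1)x$. Each of the three pieces is therefore a finite union of convex sets, and so $X$ is as well. This proves that the $\mathrm{RV}$ sort is weakly o-minimal.
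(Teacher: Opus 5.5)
Your proposal is correct and follows the paper's argument. You pass to $\mathrm{RV}^{>0}$, handling the negative part by a definable order-reversing bijection, use the exact sequence $1\to\bk^{>0}\to\mathrm{RV}^{>0}\to\Gamma\to 0$ with $\bk^{>0}$ divisible because $\bk$ is real closed, reverse the order on $\Gamma$, and apply Proposition~\ref{P:wom shortexact}. The paper leaves implicit your extra checks: that the orders agree, and that subsets of $\mathrm{RV}^{>0}$ definable in $\widetilde{\mathbf{RV}_{\bk,\Gamma}}$ are already definable in the sequence once $\bk^{>0}$ carries its full induced structure. For the latter, interpreting $\widetilde{\mathbf{RV}_{\bk,\Gamma}}$ directly in that sequence (splitting by signs) suffices, so the stable-embeddedness ingredients you cite are not actually needed.
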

    \begin{proof}
       Let $\mathrm{RV}_{>0}$ be $\rv(K_{>0})$. It is enough to show that $\mathrm{RV}_{>0}$ is weakly o-minimal (there is a definable bijection between $\mathrm{RV}_{>0}$ and $\mathrm{RV}_{<0}$).      Note that $\mathrm{RV}_{>0}$ is a subgroup of $\mathrm{RV}$. Also, $\bk_{>0}$ is a multiplicative subgroup and $\mathrm{RV}_{>0}/\bk_{>0}\cong \Gamma$.

         We have a short exact sequence $1\to \bk_{>0}\to \mathrm{RV}_{>0}\to \Gamma\to 0$, $\bk_{>0}$ is divisible because $\bk$ is real closed. 
         
        We are now in the situation of Proposition \ref{P:wom shortexact} where on $\Gamma$ we take the reverse order in order to make $v$ order preserving.
        \end{proof}

    \begin{theorem}\label{T:appendix theorem}
        Let $\CK$ be a pure real closed valued field and let $\widetilde{K}$ be  the structure $\CK$ together with weakly o-minimal expansions of $\bk$ and $\Gamma$. Then $\widetilde{K}$ is weakly o-minimal as well.
    \end{theorem}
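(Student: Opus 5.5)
The plan is to chain the two transfer steps already established: from $\bk$ and $\Gamma$ up to $\mathrm{RV}$ via Lemma \ref{L:wom shortexact of RV}, and from $\mathrm{RV}$ up to $K$ via Lemma \ref{L:Rvwom implies K wom}. The only real work is to check that the structure placed on $\mathrm{RV}$ is of the form those lemmas require.

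First, we may pass to a sufficiently saturated elementary extension, since weak o-minimality of the valued field sort is a property of the theory: each formula gives a uniformly definable family of subsets of $K$, and the number of convex components in a family defined by a fixed formula is bounded. We then work in the multi-sorted structure $(K,\mathbf{RV}_{\bk,\Gamma},\rv)$. In the pure real closed valued field $\CK$, the sorts $\bk$ and $\Gamma$ are stably embedded. Their induced structure is that of a pure real closed field, respectively a pure divisible ordered abelian group, so the given $\widetilde{\bk}$ and $\widetilde{\Gamma}$ are expansions of the $\emptyset$-induced structure. By the folklore fact on expansions of stably embedded sorts (\cite[Proposition 2.7]{CodEHaJiRi}), $\widetilde{K}$ is a legitimate expansion in which $\bk$ and $\Gamma$ carry exactly the structures $\widetilde{\bk}$ and $\widetilde{\Gamma}$.

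Next, we form $\widetilde{\mathbf{RV}_{\bk,\Gamma}}$ by adding the new structure on $\bk$ and $\Gamma$. By Lemma \ref{L:wom shortexact of RV}, the sort $\mathrm{RV}$ with its full induced structure from $\widetilde{\mathbf{RV}_{\bk,\Gamma}}$ is weakly o-minimal. The relation $\oplus$ and the order on $\mathrm{RV}$ are definable in $\mathbf{RV}_{\bk,\Gamma}$, by \cite[\S 3]{ChSi:inp} and Proposition \ref{P:wom shortexact}(1). Hence this induced structure is a weakly o-minimal enrichment $\widetilde{\mathbf{RV}}$ of $(\mathrm{RV},\cdot,0,1,<,\oplus)$.

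Finally, we observe that $\widetilde{K}$ and the structure obtained from $\CK$ by expanding $\mathbf{RV}$ to $\widetilde{\mathbf{RV}}$ are interdefinable. In one direction, $\bk$ and $\Gamma$ are interpretable in $\mathrm{RV}$ (as $\bk^\times$ and $\mathrm{RV}/\bk^\times$), so every added relation on $\bk$ or $\Gamma$ becomes a relation on $\mathrm{RV}$. In the other direction, $\widetilde{\mathbf{RV}}$ was defined as the structure induced from these same relations. Lemma \ref{L:Rvwom implies K wom} then yields that $\widetilde{K}$ is weakly o-minimal.

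The main obstacle is this bookkeeping: we must make sure that every subset of $K$ definable in $\widetilde{K}$ is of the form given by Fact \ref{F:fieldQE-RV}, with $S$ definable in the structure to which Lemma \ref{L:wom shortexact of RV} applies. This rests on the resplendence of Flenner's relative quantifier elimination, which allows arbitrary expansions of $\mathbf{RV}$. It also requires that the induced structure on $\mathrm{RV}$ is not enlarged by the field sort, and this holds because $\mathbf{RV}$ remains stably embedded after resplendent expansion. I expect these points to be routine citations rather than new arguments.
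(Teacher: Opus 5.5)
Your proposal is correct and follows the paper's proof: Lemma \ref{L:wom shortexact of RV} makes the $\mathrm{RV}$ sort weakly o-minimal, definability of $\oplus$ and of the order makes $\widetilde{\mathbf{RV}}$ a reduct of $\widetilde{\mathbf{RV}_{\bk,\Gamma}}$, and Lemma \ref{L:Rvwom implies K wom} concludes; your interdefinability bookkeeping only spells out what the paper leaves implicit. One small caveat: your opening passage to a saturated extension is not needed, because the relative quantifier eliminations (Flenner's, and the one for short exact sequences) hold in every model; its justification is also overstated, since weak o-minimality of a single structure need not transfer to its elementary extensions, so that step is safe only under the paper's standing convention that the theories of $\widetilde{\bk}$ and $\widetilde{\Gamma}$ are weakly o-minimal.
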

    \begin{proof}
           The $\widetilde{\mathbf{RV}_{\bk,\Gamma}}$ reduct is weakly o-minimal by Lemma \ref{L:wom shortexact of RV}.  As $\oplus$ is definable in this structure (see  \cite[Section 3]{ChSi:inp}), $\widetilde{\mathbf{RV}}$ is a reduct of $\widetilde{\mathbf{RV}_{\bk,\Gamma}}$. By Lemma \ref{L:Rvwom implies K wom}, $\widetilde{K}$ is weakly o-minimal.
    \end{proof}

    \begin{corollary}
        There exists a weakly o-minimal valued field $\CK$ for which $\CK^{\eq}$ does not have uniform finiteness.
    \end{corollary}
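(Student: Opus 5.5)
We take $\Gamma$ to be a weakly o-minimal expansion of an ordered abelian group whose imaginaries contain an infinite discrete linear order, as in \cite[Example A.6]{HaHaPeSs}. Concretely, this means there is a definable convex equivalence relation $E$ on $\Gamma$ with $\Gamma/E$ infinite and discrete, which is the obstruction in Lemma \ref{L: eliminating infty equiv nondiscrete}. Passing to an elementary extension, we may assume $\Gamma$ is divisible and sufficiently saturated, since weak o-minimality is preserved and a theory is weakly o-minimal here. Divisibility is needed so that $\Gamma$ is the value group of a real closed valued field.

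Next we build the field $\CK=\mathbb{R}((t^\Gamma))$, a real closed field with its natural convex valuation, so a pure model of RCVF. Its value group is $\Gamma$ as an ordered group and its residue field is $\mathbb{R}$. We expand $\CK$ by the extra structure of $\Gamma$ on the value group sort and leave $\bk=\mathbb{R}$ with its o-minimal (hence weakly o-minimal) induced structure. One should check that the given expansion of $\Gamma$ really expands the induced structure of $\CK$ on $\Gamma$. This holds because in RCVF the value group is stably embedded with induced structure the pure ordered divisible abelian group, which is a reduct of $\Gamma$. Theorem \ref{T:appendix theorem} then gives that the expansion $\widetilde{\CK}$ is weakly o-minimal. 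If Theorem \ref{T:appendix theorem} is formulated for saturated models, we pass to a sufficiently saturated elementary extension of $\widetilde{\CK}$. This changes neither weak o-minimality nor the failure of uniform finiteness, since the latter is expressed by a family of first-order sentences.

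Finally, $\Gamma$ is interpretable in $\widetilde{\CK}$ as $K^\times/\CO^\times$, and the full expanded structure on $\Gamma$ is definable there. So $E$ pulls back to a definable equivalence relation $E'$ on $K^\times$ whose quotient is $\Gamma/E$, an infinite discrete linear order in $\widetilde{\CK}^{\eq}$. The family of intervals $\{[a,b]: a<b\}$ in this order is definable, each member is finite, and the sizes are unbounded. Hence uniform finiteness fails in $\widetilde{\CK}^{\eq}$. Alternatively, one can use the easy direction of Lemma \ref{L: eliminating infty equiv nondiscrete}: the relation $x\sim y \iff v(x)\,E\,v(y)$ (with a separate class for $0$) is a definable convex equivalence relation on $K_{>0}$, and after adjusting for negatives one obtains a convex relation on all of $K$.

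The main point requiring care is the compatibility step: verifying that the structure of \cite[Example A.6]{HaHaPeSs} is a genuine weakly o-minimal expansion of a divisible ordered abelian group, possibly after passing to a divisible model, so that Theorem \ref{T:appendix theorem} applies. Everything after that is formal.
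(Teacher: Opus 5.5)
Your argument is essentially the paper's proof: take the weakly o-minimal $\widetilde{\mathcal{Q}}$ with $Q/E$ infinite and discrete from \cite[Example A.6]{HaHaPeSs}, realize it as the value group of a pure RCVF, apply Theorem \ref{T:appendix theorem}, and observe that the discrete order $Q/E$ stays interpretable. There are two small slips to fix. First, divisibility cannot be arranged by passing to an elementary extension, since it is a first-order property; it is automatic anyway, because every weakly o-minimal ordered group is divisible \cite{MacMaSt}. Second, not every interval $[a,b]$ of the discrete order need be finite (in a saturated model some are infinite). What you need is that finite intervals of every size occur, and that already violates uniform finiteness.
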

    \begin{proof}
        In \cite[Example A.6]{HaHaPeSs}, we constructed a weakly o-minimal ordered abelian group $\mathcal{Q}=(Q,+,0,<)$ and a convex equivalence relation $E$ on $Q$ such that $\widetilde{\mathcal{Q}}=(Q,+,0,<,E)$ is still weakly o-minimal but $Q/E$ is discrete. There is no harm in assuming that $\mathcal{Q}$ (and $\widetilde{\mathcal{Q}}$) is $|T|^+$-saturated.
    
        Let $\CK$ be a $|T|^+$-saturated pure real closed valued field with value group $\mathcal{Q}$, e.g. $\mathcal{R}((t^{\mathcal{Q}}))$, for some $\mathbb{R}\prec \mathcal{R}$. By Theorem \ref{T:appendix theorem}, the structure $\widetilde{\mathcal{K}}$ obtained from $\mathcal{K}$ after expanding the value group to $\widetilde{\mathcal{Q}}$ is still weakly o-minimal. As $\widetilde{\mathcal{Q}}^{\eq}$ does not have uniform finiteness, $\widetilde{\mathcal{K}}^{\eq}$ does not have it as well.
    \end{proof}
    
\bibliographystyle{plain}
\bibliography{harvard}

\end{document}